
\documentclass[preprint,12pt]{elsarticle}




\usepackage{tikz}
\usetikzlibrary{positioning}
\usepackage{amsmath}
\usepackage{amssymb}
\usepackage{mathabx}
\usepackage{mathtools}
\usepackage{graphicx}
\usepackage{bm}
\usepackage{CJKutf8}
\usepackage{ntheorem}
\usepackage{xcolor}
\usepackage{mathdots}
\usepackage{geometry}
\usepackage{float}
\usepackage{cases}
\usepackage[colorlinks,
linkcolor=red,
anchorcolor=blue,
citecolor=green
]{hyperref}

\theoremstyle{break} 
\newtheorem{theorem}{Theorem}[section]
\newtheorem{proposition}[theorem]{Proposition}
\newenvironment{proof}{\emph{Proof.}}{\hfill$\square$}
\newtheorem{lemma}[theorem]{Lemma}
\counterwithin{figure}{section}

\def \RR {{\mathbb R}}

\def \EE {{\mathbb E}}
\def \Var {{\mathbf{Var}}}
\def \OO {{\mathrm{O}}}

\begin{document}

\begin{frontmatter}



\title{Lyapunov exponents for products of truncated orthogonal matrices} 


\author{Qichao Dong} 
\ead{dqccha@mail.ustc.edu.cn}
\affiliation{organization={School of Mathematical Sciences, University of Science and Technology of China},
            addressline={No.96, JinZhai Road Baohe District}, 
            city={Hefei},
            postcode={230026}, 
            state={Anhui},
            country={China}}

\begin{abstract}
This article gives a non-asymptotic analysis of the largest Lyapunov exponent of truncated  orthogonal matrix products. We prove that as long as $N$, the number of terms in product, is sufficiently large, the largest Lyapunov exponent is asymptotically Gaussian. Furthermore, the sum of finite largest Lyapunov exponents is asymptotically Gaussian, where we use Weingarten Calculus.
\end{abstract}



\begin{keyword}
random matrices \sep Lyapunov exponent \sep Weingarten Calculus \sep truncated orthogonal matrices



\end{keyword}

\end{frontmatter}



\section{Introduction}
 \subsection{Main results}
Let $R_i$ be independent Haar distributed random real orthogonal matrices of size $(l_i+n)\times(l_i+n)$ and $A_i$ be the top $n \times n$ subblock of $R_i$, where $l_i > 0$. We consider random matrix  products
\begin{equation}\label{definition}
	X_{N,n}:=A_N\cdots A_1.
\end{equation}  
Let $s_1\geq \cdots\geq s_n$ be the singular values of $X_{N,n}$, the Lyapunov exponents of $X_{N,n}$ are defined as
\begin{equation}
	\lambda_i=\lambda_i\left(X_{N, n}\right):=\frac{1}{N} \log s_i\left(X_{N, n}\right).
\end{equation}
We prove that as long as $N$ is sufficiently large as a function of $n,l_i$, the largest Lyapunov exponent
$$
\lambda_1=\lambda_1\left(X_{N, n}\right):=\frac{1}{N} \log s_1\left(X_{N, n}\right)
$$
of $X_{N, n}$ is asymptotically Gaussian (see Theorem 1). Our estimation provides quantitative concentration estimates when $N$ is large but finite even when $n$ grows with $N$.

  Let us give some notations. Define $ l= \min\limits_{1\leq i \leq N} l_i$, $L=\max\limits_{1\leq i \leq N} l_i$, furthermore,
  \begin{align}
	\mu_{n,l_i}=\mathbb{E} \log \left(\operatorname{Beta}(n/2, l/2)\right) =\psi(n/2)-\psi((l_i+n)/2), \\
	\sigma_{n,l_i}=\mathrm{Var} \log \left(\operatorname{Beta}(n/2, l_i/2)\right) =\psi _1(n/2)-\psi _1((l_i+n)/2), 
\end{align}
\begin{align}
	\mu_{n,l}&=\frac{1}{N}\sum_{i=1}^{N}\mu_{n,l_i},\\
	\Sigma_{n,l}&=\frac{1}{N^2}\sum_{i=1}^N\sigma_{n,l_i}.
\end{align}
Our main results are as follows
\begin{theorem}\label{1.1}
Suppose $X_{N,n}$ is given as in (\ref{definition}), there exists constant $C > 0$ such that
\begin{equation}
	d_{KS}\left(\frac{\lambda_1-\mu_{n,l}}{\Sigma_{n,l}}, \mathcal{N}\left(0,  1\right)\right)
	\leq \left(\frac{4C\log^2 n \log^2 (N/n) n(n+l)}{2lN}\right)^{1/2},
\end{equation}
then $\lambda_1 $ is approximately Gaussian when N is sufficiently large as a function of $n,l$.
Here $\mathcal{N}(\mu, \Sigma)$ denotes a Gaussian with mean $\mu$ and co-variance $\Sigma$, $d_{KS}$ is the Kolmogorov-Smirnoff distance.
	\end{theorem}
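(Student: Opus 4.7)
The plan is to reduce the analysis of $\lambda_{1}$ to a sum of independent log-Beta random variables via a power-method telescoping, and then to apply a quantitative central limit theorem of Berry--Esseen type, with a small correction to pass from the norm of a fixed image vector to the operator norm.

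First, I fix a deterministic unit vector $v_{0}\in\mathbb{R}^{n}$ (for concreteness $v_{0}=e_{1}$) and set $w_{k}=A_{k}\cdots A_{1}v_{0}$, $\hat{w}_{k}=w_{k}/\|w_{k}\|$. The telescoping identity
\begin{equation}
\log\|X_{N,n}v_{0}\|=\sum_{k=1}^{N}\log\|A_{k}\hat{w}_{k-1}\|
\end{equation}
turns the problem into analyzing a sum. The key observation is that, because $A_{k}$ is the $n\times n$ top-left corner of a Haar element of $O(l_{k}+n)$ and is independent of $\hat{w}_{k-1}$, the bi-invariance of Haar measure implies that $\|A_{k}\hat{w}_{k-1}\|^{2}$, conditional on $\hat{w}_{k-1}$, is distributed as $\mathrm{Beta}(n/2,l_{k}/2)$ regardless of $\hat{w}_{k-1}$. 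Hence the summands $\log\|A_{k}\hat{w}_{k-1}\|$ are in fact \emph{unconditionally} independent, with known mean $\mu_{n,l_{k}}/2$ and variance $\sigma_{n,l_{k}}/4$.

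Second, I would apply the Berry--Esseen theorem for independent, not identically distributed summands. One needs to control the third absolute moment of $\log\mathrm{Beta}(n/2,l_{k}/2)$, which follows from its explicit density $x^{n/2-1}(1-x)^{l_{k}/2-1}$: the lower tail is subexponential with rate $n/2$ (since $\mathbb{P}(\log\mathrm{Beta}\le -t)\lesssim e^{-tn/2}$ for large $t$) and the upper tail is supported on a bounded set. Using the polygamma asymptotic $\sigma_{n,l}=\psi_{1}(n/2)-\psi_{1}((n+l)/2)\asymp l/(n(n+l))$, the Berry--Esseen quotient $\sum \mathbb{E}|X_{k}-\mu_{k}|^{3}/(\sum\sigma_{k}^{2})^{3/2}$ produces a bound of the form $\sqrt{n(n+l)/(lN)}$, with the remaining logarithmic factors in $n$ and $N/n$ arising from tail corrections to the third-moment estimate.

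Third, I would bridge from $\log\|X_{N,n}v_{0}\|$ to $\log s_{1}$. The upper inequality $\|X_{N,n}v_{0}\|\le s_{1}$ is trivial. For the lower inequality, bi-invariance of the law of $X_{N,n}$ under left and right multiplication by $O(n)$ (inherited from the Haar property of $R_{1}$ and $R_{N}$) makes the right singular vectors $v_{1},\ldots,v_{n}$ Haar-distributed on $O(n)$ and independent of the singular values. The identity
\begin{equation}
\|X_{N,n}v_{0}\|^{2}=\sum_{i=1}^{n}s_{i}^{2}|\langle v_{0},v_{i}\rangle|^{2}\ge s_{1}^{2}|\langle v_{0},v_{1}\rangle|^{2},
\end{equation}
together with $|\langle v_{0},v_{1}\rangle|^{2}\sim\mathrm{Beta}(1/2,(n-1)/2)$, yields $|\langle v_{0},v_{1}\rangle|^{2}\ge \varepsilon$ with probability $1-O(\sqrt{\varepsilon})$; choosing $\varepsilon\asymp n(n+l)/(lN)$ matches the Berry--Esseen exceptional probability, and gives $|\log\|X_{N,n}v_{0}\|-\log s_{1}|=O(\log(N/n))$ on the good event. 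After dividing by $N$ this contributes $O(\log(N/n)/N)$ to $\lambda_{1}$, which is absorbed into the stated rate, and a Slutsky-type comparison transfers the Berry--Esseen bound from $\tfrac{1}{N}\log\|X_{N,n}v_{0}\|$ to $\lambda_{1}$.

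The main obstacle will be the third step: although each ingredient (Berry--Esseen for independent sums, Beta-type concentration of the alignment factor) is standard, combining them cleanly requires optimizing the trade-off between the probability $\sqrt{\varepsilon}$ of the bad alignment event and the additive error $|\log\varepsilon|/N$ it introduces in $\lambda_{1}$. Tracking this optimization together with the higher-moment corrections needed in Berry--Esseen, and verifying that a potentially small spectral gap $s_{1}-s_{2}$ does not spoil the lower bound through the higher-index terms in the power-method identity, is what fixes the precise logarithmic factors $\log^{2}n$ and $\log^{2}(N/n)$ and the constant $C$ in the final rate.
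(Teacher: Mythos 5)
Your overall strategy matches the paper's: telescope $\log\|X_{N,n}v_0\|$ into a sum of independent $\tfrac12\log\operatorname{Beta}(n/2,l_k/2)$ variables via rotational invariance, apply a quantitative CLT to that sum, and then bridge from the random vector norm to $\log s_1$. The CLT step and the telescoping identity are essentially identical to the paper's (the paper invokes Bentkus's Theorem~\ref{1.5} for the $k$-dimensional case, but at $k=1$ this is just Berry--Esseen).

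Where you genuinely diverge is the bridge. The paper imports Proposition~8.1 of \cite{hahn} (the small-ball estimate for random projections), together with Proposition~\ref{6.3}, to control $\lambda_1 - \tfrac1N\log\|X_{N,n}(\Theta)\|$. You instead re-derive the bridge from scratch: using bi-invariance of the law of $X_{N,n}$ under $O(n)$ to make the right singular frame Haar-distributed and independent of $\Sigma$, you write $\|Xv_0\|^2 = \sum_i s_i^2|\langle v_0,v_i\rangle|^2 \geq s_1^2|\langle v_0,v_1\rangle|^2$ and use the $\operatorname{Beta}(1/2,(n-1)/2)$ law of the alignment factor. This is a clean, self-contained alternative for $k=1$ and is a legitimate substitute for the imported proposition. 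One small quantitative correction: the density of $\operatorname{Beta}(1/2,(n-1)/2)$ near zero is $\asymp\sqrt{n}\,x^{-1/2}$, so $\mathbb{P}(|\langle v_0,v_1\rangle|^2<\varepsilon)\asymp\sqrt{n\varepsilon}$, not $\sqrt{\varepsilon}$; this shifts the optimal $\varepsilon$ but not the final order of the rate. Also, your closing worry about a small spectral gap $s_1-s_2$ is unfounded: the lower bound $\|Xv_0\|\geq s_1|\langle v_0,v_1\rangle|$ holds regardless, and near-degeneracy of $s_1,s_2$ only makes $\|Xv_0\|$ closer to $s_1$, which helps.

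There is, however, a real conceptual misattribution. You claim that the Berry--Esseen quotient $\sum\mathbb{E}|X_k-\mu_k|^3/(\sum\sigma_k^2)^{3/2}$ produces the $\sqrt{n(n+l)/(lN)}$ factor. It does not: since $\log\operatorname{Beta}(n/2,l_k/2)$ has sub-gamma tails, its third absolute central moment is $\lesssim(\operatorname{Var})^{3/2}$, so the Berry--Esseen quotient for the normalized sum is $O(N^{-1/2})$ with no $n$- or $l$-dependence (exactly as in the paper's ``Part~1''). The $\sqrt{n(n+l)/(lN)}$ and the $\log n\,\log(N/n)$ factors enter entirely through the bridge: after normalizing by $\sqrt{\Sigma_{n,l}}\asymp\sqrt{l/(n(n+l)N)}$, the additive bridge error $\delta\asymp\frac{\log n\,\log(N/n)}{N}$ becomes $\delta/\sqrt{\Sigma_{n,l}}\asymp\log n\,\log(N/n)\sqrt{n(n+l)/(lN)}$, which is the dominant term. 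This is precisely what the paper's use of Proposition~\ref{6.3} makes transparent through the factor $\|\Sigma_{n,l}^{-1}\|_{HS}^{1/2}$ multiplying $\delta$. Your proposal would reach the same bound once you carry out the ``Slutsky-type comparison'' carefully, but as written it misidentifies which step carries the polynomial-in-$n$ cost, and that is the part of the proof where the actual optimization lives.
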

Futhermore, for $k$ dimension case we have
\begin{theorem}\label{1.2}
	 Suppose $X_{N,n}$ is as in (\ref{definition}), k is finite, then the sum of the top k Lyapunov exponents $\lambda_1+\cdots+\lambda_k$ is approximately Gaussian.
\end{theorem}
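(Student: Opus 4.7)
The plan is to reduce Theorem \ref{1.2} to a Berry--Esseen style CLT for an independent sum, in parallel with the strategy behind Theorem \ref{1.1}. Writing
\begin{equation*}
\sum_{j=1}^{k} \lambda_j \;=\; \frac{1}{N}\log \prod_{j=1}^{k} s_j(X_{N,n}),
\end{equation*}
the task becomes a quantitative Gaussian approximation for the logarithm of the top-$k$ singular volume of $X_{N,n}$.

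First I would fix an orthonormal $k$-frame $Q_0 \in \mathbb{R}^{n\times k}$ and iteratively compute the thin QR decomposition $A_i Q_{i-1} = Q_i R_i$ with $R_i$ upper triangular of size $k$. Telescoping yields
\begin{equation*}
\bigl|\det(Q_0^{T} X_{N,n}^{T} X_{N,n} Q_0)\bigr|^{1/2} \;=\; \prod_{i=1}^{N} |\det R_i|.
\end{equation*}
By right invariance of the Haar measure on $O(l_i+n)$, conditionally on $Q_0,\ldots,Q_{i-1}$ one may rotate so that $Q_{i-1} = [e_1,\ldots,e_k]$; this makes $A_iQ_{i-1}$ distributionally equal to the top-left $n\times k$ block of a fresh Haar orthogonal. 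A standard Gram--Schmidt argument, whose joint moment content is supplied by the Weingarten formula for $O(l_i+n)$, then shows that $|\det R_i|^2$ factors as a product of $k$ independent Beta variables with parameters $((n-j+1)/2,\,l_i/2)$, $j=1,\ldots,k$. Consequently $\log\prod_i|\det R_i|$ is a sum of $Nk$ independent log-Beta variables, to which the Berry--Esseen argument of Theorem \ref{1.1} applies almost verbatim, the only novelty being $k$-fold more summands per step.

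The main obstacle will be reconciling the frame-dependent quantity on the left with the intrinsic product $\prod_{j=1}^k s_j(X_{N,n})$. For a generic initial frame,
\begin{equation*}
\bigl|\det(Q_0^{T} X_{N,n}^{T} X_{N,n} Q_0)\bigr|^{1/2} \;\le\; \prod_{j=1}^{k} s_j(X_{N,n}),
\end{equation*}
with equality only when $Q_0$ spans the top-$k$ right singular subspace $V_k$ of $X_{N,n}$. A Cauchy--Binet expansion of $\det(Q_0^T X_{N,n}^T X_{N,n} Q_0)$ in the right singular basis shows that the logarithmic defect equals $-\log|\det(V_k^T Q_0)|^2$ up to exponentially small corrections controlled by the Lyapunov gap $s_{k+1}/s_k$. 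Since the exponents of products of truncated orthogonals are known to be simple, this defect is $\OO(1)$ with high probability and therefore $\OO(1/N)$ on the Lyapunov scale, which is smaller than the target fluctuation scale of order $N^{-1/2}$ and is absorbed into the Berry--Esseen remainder.

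As an alternative, one may bypass the frame iteration by viewing $\prod_{j=1}^{k} s_j(X_{N,n})$ as the operator norm of the exterior-power product $\wedge^k A_N \cdots \wedge^k A_1$ and reapplying the argument of Theorem \ref{1.1}, with the joint moments of the exterior-power entries again furnished by the Weingarten formula. Either route produces a Gaussian limit for $\lambda_1+\cdots+\lambda_k$ with an explicit Kolmogorov--Smirnov bound analogous to that of Theorem \ref{1.1}; the restriction to finite $k$ ensures that the combinatorial factors arising in the Weingarten expansion and in the Berry--Esseen third-moment estimate remain controlled.
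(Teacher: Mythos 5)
Your proposal is essentially correct, but your \emph{primary} route is genuinely different from the paper's, while your parenthetical ``alternative'' is in fact the route the paper actually takes. The paper reduces, as in Theorem~\ref{1.1}, to the telescoped independent sum $\sum_i \log\|A_i(\Theta)\|$ with $\Theta$ the standard $k$-frame, identifies $\|A_i(\Theta)\|^2 = Z_i := \det\bigl(A_{i,(k)}^{*}A_{i,(k)}\bigr)$, expands $Z_i$ via the Gram/Leibniz identity, and then uses the Weingarten formula on $O(l_i+n)$ to compute $\mathbb{E}Z_i$, $\mathbf{Var}\,Z_i = \OO(1/n)$ and $\mathbb{E}(Z_i-\mathbb{E}Z_i)^4 = \OO(1/n^2)$; a delta-method Taylor expansion of $\log Z_i$ then feeds Bentkus's theorem. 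Your preferred route instead observes that $A_{i,(k)}^{T}A_{i,(k)}$ is a matrix-variate $\mathrm{Beta}_k(n/2,\,l_i/2)$, whose determinant factorizes exactly as $\prod_{j=1}^{k}\mathrm{Beta}\bigl((n-j+1)/2,\,l_i/2\bigr)$ with independent factors. This is correct and arguably sharper: it replaces the asymptotic Weingarten moment estimates and the delta-method approximation by an exact decomposition into $Nk$ independent log-Beta summands, to which the one-dimensional analysis of Theorem~\ref{1.1} applies term by term. Two cautions, though. First, the factorization is \emph{not} ``supplied by the Weingarten formula'' --- Weingarten gives joint moments, not a distributional identity; the correct justification is the Bartlett-type determinant decomposition of the matrix beta law (or an explicit sequential Gram--Schmidt argument), and that should be cited or proved outright. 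Second, for the frame-defect term the paper simply reuses Proposition~8.1 of \cite{hahn} (small-ball estimates for volumes of random projections), which gives a quantitative, gap-free bound on $\bigl|\frac{1}{N}\log\|X_{N,n}(\Theta)\| - \sum_{j\le k}\lambda_j\bigr|$; your Cauchy--Binet route via the Lyapunov gap $s_{k+1}/s_k$ is plausible but requires a separate quantitative simplicity statement that you have not established and that risks circularity. Using the existing small-ball estimate instead closes your argument cleanly.
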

\textbf{Remark:} Our results can be extended to truncated unitary matrices.\\
\subsection{Prior work}
Furstenberg and Kesten first proved the convergence of $\lambda_1$  provided that $\EE \log_{+}\left(\|A_i\|\right) <  \infty$ in their seminal work \cite{Fursten}. Oseledec proved convergence of other singular values later in his work \cite{Ose}, which is referred as multiplicative ergodic theorem. Cohen and Newman in \cite{Newman} studied the behavior of the limit in the situation when N approaches infinity. Moreover, the work of LePage \cite{Le} and subsequent work \cite{Car} showed that the top Lyapunov exponent of matrix products such as $X_{N,n}$ (not necessarily Gaussian) is asymptotically normal. To the best of our knowledge, all known mathematical proofs of asymptotic normality results hold only for finite fixed $n$ and do not include quantitative rates of convergence. For the  case we study, we overcome these deficiencies.

   When interpreting \( t = \frac{n}{N} \) as a time parameter in an interacting particle system, several significant developments emerge in the literature. Notably, a series of works~\cite{AB12, ABKN14, ABK14}, particularly~\cite{ABK19}, rigorously establish a correspondence between the parameter  t  and the temporal evolution of such stochastic systems. This profound connection between the singular values of complex Ginibre matrix products and Dyson Brownian Motion (DBM) was first identified by Maurice Duits.
   
   The joint distribution of singular values for products of complex Gaussian matrices has been rigorously analyzed through their determinantal kernel structure in several works. In particular, \cite{LWW} shows that when $N$ is arbitrary but fixed and $n \rightarrow \infty$, the determinental kernel for singular values in products of $N$ iid complex Gaussian matrices of size $n \times n$ converges to the familiar sine and Airy kernels that arise in the local spectral statistics of large GUE matrices in the bulk and edge, respectively.  Moreover, \cite{LWW} rigorously obtained an expression for the limiting determinental kernel when $t=\frac{n}{N}$ is arbitrary in the context of products of complex Ginibre. $X_{N, n}$ around the triangle law always converge to a Gaussian field.
   
    We also refer the reader to \cite{Ahn19}, which obtains a CLT for linear statistics of top singular values when $n / N$ is fixed and finite.
 For the real Gaussian case, \cite{hahn} provides non-asymptotic analysis of the singular values, which inspires this article.

\subsection{Strategy of proof}
Basically, we follow the strategy in \cite{hahn} . By reduction to small ball estimates for volumes of random projections ( Proposition 8.1 and Lemma 8.2 in \cite{hahn}), we can estimate the difference 
\begin{equation}
\frac{1}{N}\log \left\|X_{N, n}(\Theta)\right\| 	-\sup_{\Theta \in \RR}\frac{1}{N}\log \left\|X_{N, n}(\Theta)\right\|=\frac{1}{N}\log \left\|X_{N, n}(\Theta)\right\|-\lambda_1.
\end{equation}
\begin{proposition}[\cite{hahn}Proposition 8.1]
	There exists $C>0$ with the following property. For any $\varepsilon \in(0,1)$ and any $\Theta \in \operatorname{Fr}_{n, k}$ we have
\begin{equation}
	\mathbb{P}\left(\left.\left|\frac{1}{N} \log \left\|X_{N, n}(\Theta)\right\|-\sup _{\Theta^{\prime} \in \operatorname{Fr}_{n, k}} \frac{1}{N} \log \| X_{N, n}\left(\Theta^{\prime}\right)\right| \right\rvert\, \geq \frac{k}{2 N} \log \left(\frac{n}{k \varepsilon^2}\right)\right) \leq(C \varepsilon)^{k / 2} ,
\end{equation}
\end{proposition}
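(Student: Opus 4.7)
I read $\|X_{N,n}(\Theta)\|$ as the $k$-dimensional volume $\sqrt{\det(\Theta^{*}X_{N,n}^{*}X_{N,n}\Theta)}$. By Ky Fan's principle (equivalently, Courant--Fischer applied to $X_{N,n}^{*}X_{N,n}$), the supremum $\sup_{\Theta'\in\operatorname{Fr}_{n,k}}\|X_{N,n}(\Theta')\|$ equals the product $s_1\cdots s_k$ of the top $k$ singular values of $X_{N,n}$, so the stated bound is equivalent to a small-ball estimate on the ratio $\|X_{N,n}(\Theta)\|/(s_1\cdots s_k)$. I would control this ratio in three steps.

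First, write the SVD $X_{N,n}=U\Sigma V^{*}$ and let $V_1$ denote the $n\times k$ block of top right singular vectors. Expanding the $k\times k$ matrix $\Theta^{*}V\Sigma^{2}V^{*}\Theta$ and keeping only the contribution of the top-$k$ block yields the purely algebraic lower bound
\begin{equation*}
\|X_{N,n}(\Theta)\|^{2}\;\geq\;(s_1\cdots s_k)^{2}\,|\det(\Theta^{*}V_1)|^{2}.
\end{equation*}
Second, I identify the law of $V_1$. The Haar measure on each $R_i\in O(n+l_i)$ is invariant under right multiplication by $\mathrm{diag}(Q,I_{l_i})$ for $Q\in O(n)$, so $A_1Q\stackrel{d}{=}A_1$ and hence $X_{N,n}Q\stackrel{d}{=}X_{N,n}$ for every $Q\in O(n)$. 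Comparing SVDs then forces $V_1$ to be Haar-uniform on $\operatorname{Fr}_{n,k}$ and independent of $\Sigma$. Third, by a further rotational reduction I may assume $\Theta$ is the first $k$ standard basis vectors of $\RR^n$, so that $\Theta^{*}V_1$ becomes the top $k\times k$ block of a Haar $k$-frame in $\RR^{n}$. Its squared determinant factors as a product of $k$ independent Beta random variables whose parameters depend on $(n,k)$, and direct integration of the resulting density near the origin yields
\begin{equation*}
\mathbb{P}\bigl(|\det(\Theta^{*}V_1)|^{2}\leq (k\varepsilon^{2}/n)^{k}\bigr)\;\leq\;(C\varepsilon)^{k/2}.
\end{equation*}
Combining this with the algebraic lower bound, then taking logarithms and dividing by $N$, recovers the proposition; the absolute value in the statement is vacuous because the supremum always dominates $\|X_{N,n}(\Theta)\|$.

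The delicate step will be the third one: the constant $C$ must be controlled uniformly in $n$ and $k$ while integrating the Beta-product density near zero, since the individual Beta parameters degenerate as $k$ approaches $n$. In the bulk regime $k\ll n$ the Betas are well approximated by scaled chi-squared variables and the estimate reduces to the classical small-ball bound for determinants of Gaussian matrices; in the complementary regime one must exploit the explicit Jacobian on the Stiefel manifold to track the density at the origin. Steps one and two are routine algebra/invariance arguments and should present no real difficulty.
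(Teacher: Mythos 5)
Your proposal is correct and matches the argument the present paper relies on: this Proposition is quoted directly from Hanin--Paouris \cite{hahn} (where it is Proposition~8.1), and their proof proceeds exactly as you outline --- identify $\sup_{\Theta'}\|X_{N,n}(\Theta')\|$ with $s_1\cdots s_k$, obtain the Cauchy--Binet lower bound $\|X_{N,n}(\Theta)\|^2\geq (s_1\cdots s_k)^2\,|\det(\Theta^*V_1)|^2$, use right-rotation invariance of the ensemble (which this paper explicitly verifies for truncated orthogonal $A_i$) to deduce that $V_1$ is Haar on the Stiefel manifold and independent of the singular values, and finish with a small-ball estimate for the $k$-volume of the coordinate projection of a Haar $k$-frame via the Beta-product representation. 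The one point worth making explicit when you carry out step three is the uniformity of $C$ in $n$ and $k$, which you already flag; in \cite{hahn} this is handled by the explicit Beta densities and the elementary inequality $\Gamma(a+b)/\Gamma(a)\leq (a+b)^b$, so no regime-splitting between $k\ll n$ and $k\approx n$ is actually needed.
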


We use a high-dimensional version of Kolmogorov-Simirnov distance to measure normality,
\begin{equation*}
	d(X, Y):=\sup _{C \in \mathcal{C}_k}|\mathbb{P}(X \in C)-\mathbb{P}(Y \in C)|.
\end{equation*}
We have the following :
\begin{proposition}[\cite{hahn}Proposition 6.3]\label{6.3}
	There exists $c>0$ with the following property. Suppose that $X, Y$ are $\mathbb{R}^k$-valued random variables defined on the same probability space. For all $\mu \in \mathbb{R}^k$, invertible symmetric matrices $\Sigma \in \mathrm{Sym}_k^{+}$, and $\delta>0$ we have

$$
d(X+Y, \mathcal{N}(\mu, \Sigma)) \leq 3 d(X, \mathcal{N}(\mu, \Sigma))+c \delta \sqrt{\left\|\Sigma^{-1}\right\|_{H S}}+2 \mathbb{P}\left(\|Y\|_2>\delta\right) .
$$

\end{proposition}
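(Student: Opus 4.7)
The plan is to prove this by the classical truncation-and-smoothing technique, combined with a Gaussian anti-concentration estimate on thin neighbourhoods of $\partial C$. The underlying intuition is that on the high-probability event $\{\|Y\|_2\leq\delta\}$, shifting $X$ to $X+Y$ can be absorbed into an $\ell_2$-inflation of any test set $C\in\mathcal{C}_k$, while the complementary tail contributes only $\mathbb{P}(\|Y\|_2>\delta)$.

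First, I would fix $C\in\mathcal{C}_k$, write $G\sim\mathcal{N}(\mu,\Sigma)$, and denote by $C^{+\delta}$ and $C^{-\delta}$ the outer and inner $\ell_2$-neighbourhoods of $C$ at radius $\delta$. The inclusions $\{X\in C^{-\delta}\}\subseteq\{X+Y\in C\}\subseteq\{X\in C^{+\delta}\}$ hold on $\{\|Y\|_2\leq\delta\}$, so
\[
\mathbb{P}(X\in C^{-\delta})-\mathbb{P}(\|Y\|_2>\delta)\leq\mathbb{P}(X+Y\in C)\leq\mathbb{P}(X\in C^{+\delta})+\mathbb{P}(\|Y\|_2>\delta).
\]
Subtracting $\mathbb{P}(G\in C)$ throughout reduces the task to controlling $|\mathbb{P}(X\in C^{\pm\delta})-\mathbb{P}(G\in C)|$.

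Second, I would transfer from $X$ to $G$ via the metric $d$. Since $C^{\pm\delta}$ either lie in $\mathcal{C}_k$ or can be sandwiched between two elements of $\mathcal{C}_k$ (using the closure of the class under $\delta$-inflation, typical for orthants, hyperrectangles, half-spaces, or convex sets), we obtain
\[
|\mathbb{P}(X\in C^{\pm\delta})-\mathbb{P}(G\in C^{\pm\delta})|\leq\alpha\,d(X,\mathcal{N}(\mu,\Sigma))
\]
with a small absolute constant $\alpha$; the factor $3$ in the statement is the book-keeping cost of handling both $C^{+\delta}$ and $C^{-\delta}$ together with their sandwich approximations in $\mathcal{C}_k$.

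Third, I would invoke Gaussian anti-concentration in the form $\mathbb{P}(G\in C^{+\delta}\setminus C^{-\delta})\leq c\delta\sqrt{\|\Sigma^{-1}\|_{HS}}$. After the affine change of variables $G\mapsto\Sigma^{-1/2}(G-\mu)$ this reduces to a standard Gaussian strip estimate in some direction $v$; the strip has width of order $\delta/\sqrt{v^{\top}\Sigma v}\leq\delta\sqrt{\|\Sigma^{-1}\|_{op}}\leq\delta\sqrt{\|\Sigma^{-1}\|_{HS}}$, and the boundedness of the standard normal density closes the estimate. The main obstacle is precisely this third step: obtaining the sharp $\|\Sigma^{-1}\|_{HS}^{1/2}$ dependence uniformly over $\mathcal{C}_k$ (rather than a weaker $\sqrt{k}\,\|\Sigma^{-1}\|_{op}^{1/2}$-type bound) requires choosing the class $\mathcal{C}_k$ rich enough for the high-dimensional Kolmogorov--Smirnov metric yet structured enough to admit a Nazarov-type anti-concentration. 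Once this is in hand, assembling the three ingredients and taking the supremum over $C\in\mathcal{C}_k$ yields the stated inequality.
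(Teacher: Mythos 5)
The paper does not actually prove Proposition~\ref{6.3}; it is cited verbatim from \cite{hahn}, so there is no internal argument here to compare against. That said, your proposal follows the natural and correct route, and its first two steps are sound: on the event $\{\|Y\|_2\le\delta\}$ the sandwich $\{X\in C^{-\delta}\}\subseteq\{X+Y\in C\}\subseteq\{X\in C^{+\delta}\}$ is exact, and if $C^{\pm\delta}$ remain in $\mathcal{C}_k$ (as for convex sets, which are closed under outer and inner $\ell_2$-inflation) the constant in front of $d(X,\mathcal{N}(\mu,\Sigma))$ comes out as $1$, the factor $3$ being generous book-keeping room for coarser classes where $C^{\pm\delta}$ must be sandwiched between two members of $\mathcal{C}_k$.

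The one genuine gap is in step 3, and you flag it, but let me sharpen why your slab reduction does not suffice as written. It is exact only when $C$ is a half-space $\{x:\langle v,x\rangle\le t\}$ with $\|v\|_2=1$: then $\Sigma^{-1/2}(C^{+\delta}\setminus C^{-\delta}-\mu)$ really is a slab of width $2\delta/\sqrt{v^\top\Sigma v}\le 2\delta\|\Sigma^{-1}\|_{op}^{1/2}\le 2\delta\|\Sigma^{-1}\|_{HS}^{1/2}$ and the bounded normal density closes it. For general convex $C$ the image is an annular collar, not a slab, and the generic Nazarov/Ball estimate yields $\mathbb{P}\bigl(Z\in\partial_{\delta'}K\bigr)\lesssim k^{1/4}\delta'$ with $\delta'=\delta\|\Sigma^{-1}\|_{op}^{1/2}$; since $\|\Sigma^{-1}\|_{HS}^{1/2}\le k^{1/4}\|\Sigma^{-1}\|_{op}^{1/2}$ always, this is \emph{weaker} than the stated $c\,\delta\|\Sigma^{-1}\|_{HS}^{1/2}$, not a proof of it. For lower-left orthants (the class one actually wants for ordered partial sums of Lyapunov exponents) the collar decomposes into $k$ coordinate slabs and one gets $\lesssim\delta\sum_i\Sigma_{ii}^{-1/2}$, which again matches $\|\Sigma^{-1}\|_{HS}^{1/2}$ only up to a $k$-dependent factor. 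So to close the proof you must either exploit the specific structure of the class $\mathcal{C}_k$ used in \cite{hahn}, or allow $c$ to depend on $k$ --- harmless for the applications in this paper (where $k$ is $1$ or fixed), but it changes the nature of the statement and should be made explicit.
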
 
 We follow the notation in Bentkus \cite{Ben05} with one dimension case and define 

$$
S:=S_N=X_1+\cdots+X_N,
$$
where $X_1, \ldots, X_N$ are independent random varibles in $\mathbb{R}^k$ with common mean $\mathbb{E} X_j=$ 0 . We set

$$
C:=\operatorname{cov}(S)
$$
to be the covariance matrix of $S$, which assumed to be invertible. With the definition
$$
\beta_j:=\mathbb{E}\left\|C^{-\frac{1}{2}} X_j\right\|_2^3, \beta:=\sum_{j=1}^N \beta_j,$$
we have the following :
\begin{theorem}[\cite{Ben05}Theorem 1.1]\label{1.5}
There exists an absolute constant $c>0$ such that
\begin{equation}
	d\left(S, C^{\frac{1}{2}} Z\right) \leq c k^{\frac{1}{4}} \beta,
\end{equation}
where $Z \sim \mathcal{N}\left(0, \mathrm{Id}_k\right)$ denotes a standard Gaussian on $\mathbb{R}^k$.
\end{theorem}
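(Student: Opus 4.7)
The plan is to prove this multivariate Berry--Esseen bound by combining a Lindeberg-type interpolation between $\tilde S = C^{-1/2}S$ and a Gaussian with a careful smoothing of the indicator $\mathbf{1}_A$ of a convex set, with the sharp $k^{1/4}$ dependence coming from a Gaussian isoperimetric inequality of Ball. First I would reduce to the isotropic case by replacing $X_j$ with $\tilde X_j := C^{-1/2}X_j$, so that $\mathrm{cov}(\tilde S) = \mathrm{Id}_k$ and $\beta_j = \mathbb E \|\tilde X_j\|_2^3$. Since $\mathcal C_k$ is invariant under non-singular linear maps, $d(\tilde S, Z) = d(S, C^{1/2}Z)$, and it suffices to bound $d(\tilde S, Z)$ by $c\,k^{1/4}\beta$. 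For $A \in \mathcal C_k$ and a scale $\varepsilon > 0$ I smooth the indicator by Gaussian convolution,
\begin{equation*}
f_{A,\varepsilon}(x) := \mathbb P(x + \varepsilon\,\xi \in A), \qquad \xi \sim \mathcal N(0, \mathrm{Id}_k),
\end{equation*}
noting that for $W \in \{\tilde S, Z\}$ the replacement error $|\mathbb P(W\in A) - \mathbb E f_{A,\varepsilon}(W)|$ is controlled by the probability that $W$ falls in the $O(\varepsilon)$-neighborhood of $\partial A$.

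Next I would carry out a Lindeberg swap. Introducing independent $Z_j \sim \mathcal N(0, \mathrm{cov}(\tilde X_j))$ and telescoping along the hybrids $W_j := \tilde X_1 + \cdots + \tilde X_{j-1} + Z_{j+1} + \cdots + Z_N$, the third-order Taylor expansion of $f_{A,\varepsilon}$ around $W_j$ combined with the matching of mean and covariance forces cancellation of zeroth-, first-, and second-order terms, leaving
\begin{equation*}
|\mathbb E f_{A,\varepsilon}(\tilde S) - \mathbb E f_{A,\varepsilon}(Z)| \leq c\, \bigl\| D^3 f_{A,\varepsilon}\bigr\|_{*}\cdot \beta,
\end{equation*}
where the relevant seminorm $\|\cdot\|_{*}$ is not the naive $L^\infty$ norm but a Gaussian-averaged one obtained by integrating by parts against the convolution density. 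For convex $A$ this integration by parts lets one trade one derivative for an integral over $\partial A$, which is the mechanism for importing convexity into the estimate.

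The key geometric input, and where $k^{1/4}$ actually enters, is Ball's theorem (sharpened by Nazarov): for every convex $A \subseteq \mathbb R^k$,
\begin{equation*}
\gamma_k\bigl(\{x : \mathrm{dist}(x, \partial A) \leq \varepsilon\}\bigr) \leq c\, k^{1/4}\, \varepsilon.
\end{equation*}
This bounds the smoothing error on the Gaussian side, and plugged into the integration-by-parts formula above it also upgrades the derivative seminorm $\|D^3 f_{A,\varepsilon}\|_{*}$ from a crude $\varepsilon^{-3}$ to something closer to $k^{1/4}\varepsilon^{-2}$. The analogous bound on the non-Gaussian side, $\mathbb P(\tilde S \in A^{+\varepsilon}\setminus A^{-\varepsilon})$, must be bootstrapped from a crude a priori Gaussian approximation, using that the $\tilde X_j$ have finite third moments.

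The main obstacle is that a single pass through the above steps combined with a naive optimization in $\varepsilon$ does \emph{not} yield the clean linear-in-$\beta$ rate $c\,k^{1/4}\beta$: it produces a suboptimal power such as $k^{1/6}\beta^{1/3}$. To close the gap I would follow Bentkus's recursive strategy, using the bound obtained at scale $\varepsilon$ to control the non-Gaussian strip probability more precisely, re-entering the smoothing/Lindeberg loop at a smaller scale, and iterating to a fixed point. The delicate bookkeeping in this bootstrap, together with the sharp derivative estimates for convex-smoothed indicators that underlie it, is the technical heart of the argument; the Lindeberg swap and the Ball/Nazarov inequality, though conceptually central, are the easier inputs.
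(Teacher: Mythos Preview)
The paper does not prove this statement at all: Theorem~\ref{1.5} is quoted verbatim from Bentkus~\cite{Ben05} and used as a black box in the proofs of Theorems~\ref{1.1} and~\ref{1.2}. There is nothing in the paper to compare your proposal against.

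Your sketch is a reasonable outline of Bentkus's own argument in~\cite{Ben05}---the Lindeberg replacement, Gaussian smoothing of convex indicators, the Ball--Nazarov Gaussian surface-area bound $\gamma_k(\partial A^{+\varepsilon}) \le c k^{1/4}\varepsilon$, and the induction/bootstrap that upgrades a crude rate to the linear $k^{1/4}\beta$. That is indeed the technical heart of~\cite{Ben05}, and you have correctly identified that a single pass gives a suboptimal power and that the recursion is where the work lies. But since the present paper treats this as an imported tool, no proof is expected here; if anything, you could simply cite~\cite{Ben05} and move on.
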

We use Proposition~\ref{6.3} and Theorem~\ref{1.5} to derive Theorem~\ref{1.1}.

For Theorem~\ref{1.2}, we use integration with respect to the Haar measure on orthogonal group, so-called Weingarten calculus, to estimate moments of the sum of Lyapunov exponents, which is new technique in this area.  
\section{Proof for Theorem \ref{1.1}} 
Let ${A_i,i=1,\cdots,N}$, is drawn from rotationally invariant ensemble, so \cite{hahn}lemma 8 and proposition 8.1 still can be used.
According to \cite{hahn} lemma 9.5, which is standard technique in this area. We have 
\begin{align}
\log \left\|X_{N, n}(\Theta)\right\| & =\log \left\|A_N \cdots A_1(\Theta)\right\| \\
& =\log \left\|A_N \cdots A_2 \frac{A_1(\Theta)}{\left\|A_1(\Theta)\right\|}\right\|+\log \left\|A_1(\Theta)\right\|.
\end{align}
Moreover, $A_2\left(\frac{A_1(\Theta)}{\left\|A_1(\Theta)\right\|}\right)$ is independent of $A_3, \cdots, A_N$ and 
\begin{equation}
	A_2\left(\frac{A_1(\Theta)}{\left\|A_1(\Theta)\right\|}\right) \stackrel{d}{=} A_2(\Theta),
\end{equation}
the above equations use the rotationally invariance of ${A_i,i=1,\cdots,N}$.\\
In conclusion, we have
\begin{equation}
	\log \left\|X_{N, n}(\Theta)\right\| \stackrel{d}{=} \sum_{i=1}^N \log \left\|A_i(\Theta)\right\|
\end{equation}
In order to do precise calculation, we assume $\Theta$ is standard orthogonal basis and consider $\Theta$ is 1 dimensional.\\
 Here we recall a well-known result, Haar distributed orthogonal matrix can be derived from Gram-Schmidt transformation of Ginibre matrix, where entries are all real standard Gaussian variables, see \cite{Dia} for more details.
 Then
\begin{equation}
	\log \left\|A_i(\Theta)\right\|=\log \Vert \zeta_i \Vert ,
\end{equation}
where $\zeta_i$ is the first row of $A_i$. Furthermore,\begin{align*}
	{\Vert \zeta_i \Vert}^2=\frac{{x_{1}}^2+\cdots+{x_{n}}^2}{{\Vert \vec{x} \Vert}^2}
\end{align*}
where $\vec{x}$ is a vector of independent standard Gaussian variables. Then we have 
\begin{align*}
	{\Vert \zeta_1 \Vert}^2 \sim \operatorname{Beta}(n/2, l/2).
\end{align*}
since if $ X \sim \operatorname{Gamma}(\mathrm{a}, \theta) $  and $ Y \sim \operatorname{Gamma}(\beta, \theta)$ 
	 are independent, then $\frac{X}{X+Y} \sim \operatorname{Beta}(\alpha, \beta)$.
	 \\
According to \cite{hahn} Lemma 9.5, we have in distribution that
\begin{equation}
	\frac{2}{N} \log \left\|X_{N, n}(\Theta)\right\|=\frac{1}{N} \sum_{i=1}^N T_i,
\end{equation}
where $T_i$ are independent and
\begin{equation}
	T_i \sim \log \left(\operatorname{Beta}(n/2, l_i/2)\right) .
	\end{equation}
We already know that 
\begin{align}
	\mu_{n,l_i}=\mathbb{E} \log \left(\operatorname{Beta}(n/2, l_i/2)\right) =\psi(n/2)-\psi((l_i+n)/2), \\
	\sigma_{n,l_i}=\mathrm{Var} \log \left(\operatorname{Beta}(n/2, l_i/2)\right) =\psi _1(n/2)-\psi _1((l_i+n)/2),
\end{align}
where $\psi(z)=\frac{\mathrm{d}}{\mathrm{d} z} \log \Gamma(z)=\frac{\Gamma^{\prime}(z)}{\Gamma(z)}, \psi_1(z)=\frac{\mathrm{d}}{\mathrm{d} z} \psi(z)$.

We find in particular that
\begin{align}
	\mathbb{E}\left[\frac{1}{N} \log \left\|X_{N, n}(\Theta)\right\|\right]=\mu_{n, l}
\end{align}
for any random variable $Y$ and we will use the shorthand
$$
\bar{Y}:=Y-\mathbb{E}[Y] .
$$
We will apply Markov's inequality to bound moments of the sum of $T_i$'s, so we do the following estimate.
\begin{proposition}\label{3}
	There exists a universal constant $C$ so that for any $n, N, l$ and $p \geq 1$
\begin{align}
	\left(\mathbb{E}\left[\left|\sum_{i=1}^N \bar{T}_i\right|^p\right]\right)^{1 / p} \leq C\left(\sqrt{p \sum_{j=1}^N \frac{1}{M_N}}+\frac{p}{M_N}\right) \leq C\left(\sqrt{\frac{pN}{M}}+\frac{p}{M}\right),
\end{align} where $M_N=\frac{n^2(n+l_N+2)}{(n+l_N)^2},M=\frac{n^2(n+L+2)}{(n+L)^2}.$
\end{proposition}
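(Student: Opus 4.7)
The plan is to establish a Bernstein-type moment generating function estimate for each centered summand $\bar T_i$, combine them by independence, and convert the result into a $p$-th moment bound by a Chernoff plus layer-cake argument. The starting point is that for $B_i\sim\operatorname{Beta}(n/2,l_i/2)$ the MGF of $T_i=\log B_i$ is available in closed form from the Beta integral,
\begin{equation*}
\mathbb{E}[e^{tT_i}] \;=\; \frac{\Gamma(n/2+t)\,\Gamma((n+l_i)/2)}{\Gamma(n/2)\,\Gamma((n+l_i)/2+t)},\qquad t>-n/2,
\end{equation*}
so its logarithm $K_i(t)$ satisfies $K_i'(0)=\mu_{n,l_i}$, $K_i''(0)=\sigma_{n,l_i}$ and $K_i''(t)=\psi_1(n/2+t)-\psi_1((n+l_i)/2+t)$.

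The first step is to show that on an admissible interval $|t|\leq M_i/(2c)$ the CGF satisfies the standard Bernstein estimate
\begin{equation*}
K_i(t)-t\mu_{n,l_i} \;\leq\; \frac{t^2/(2M_i)}{1-c|t|/M_i}
\end{equation*}
for an absolute constant $c>0$, that is, each $\bar T_i$ is sub-exponential with variance proxy and range parameter both of size $1/M_i$. This reduces to bounding the polygamma difference $\psi_1(n/2+t)-\psi_1((n+l_i)/2+t)$ and its $\psi_2,\psi_3$ analogues uniformly in $t$, which I would do using the series $\psi_k(z)=(-1)^{k+1}k!\sum_{j\geq 0}(z+j)^{-k-1}$ together with elementary telescoping; the specific constant $M_i=n^2(n+l_i+2)/(n+l_i)^2$ emerges naturally from these estimates.

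Once the single-term Bernstein bound is in place, independence gives $\log\mathbb{E}\exp(t\sum_i\bar T_i)\leq\sum_i[K_i(t)-t\mu_{n,l_i}]$, which inherits the Bernstein form with variance proxy $V=\sum_i 1/M_i\leq N/M$ and range $1/M$. A standard Chernoff optimization then yields the two-sided tail
\begin{equation*}
\mathbb{P}\Bigl(\Bigl|\textstyle\sum_i\bar T_i\Bigr|\geq x\Bigr)\;\leq\; 2\exp\bigl(-\min\{x^2/(2V),\,xM/(2c)\}\bigr),
\end{equation*}
and inserting this into the layer-cake identity $\mathbb{E}|S|^p=p\int_0^\infty x^{p-1}\mathbb{P}(|S|>x)\,dx$, splitting the integration at the Gaussian/Poisson crossover scale $x\sim Vp/M$, produces precisely the two summands $\sqrt{pV}+p/M$ in the statement.

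The main technical obstacle lies in the first step. The naive estimate one reads off from the exponential left tail of $\log B_i$ alone gives a range of order $2/n$, which does not recover the sharper $1/M_i$ scaling; to get the stated constant one must carefully track the cancellation in $\psi_k(n/2+t)-\psi_k((n+l_i)/2+t)$ for $k=1,2,3$ uniformly over the admissible $t$-range, exploiting both polygamma arguments rather than discarding the second one. Once this individual sub-exponential estimate is obtained, the summation over $i$, the Chernoff step, and the layer-cake integration are standard and contribute only absolute constants.
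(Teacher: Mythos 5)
Your route is genuinely different from the paper's. The paper first derives individual moment bounds $(\mathbb{E}|\bar T_i|^p)^{1/p}\lesssim\sqrt{p/M_i}$ from Skorski's Bernstein-type tail inequality for $\operatorname{Beta}(n/2,l_i/2)$ (after recentring from $\mu_{n,l_i}$ to $\log\tfrac{n}{n+l_i}$), and then applies Latała's theorem on moments of sums of independent random variables through a binomial expansion of $\mathbb{E}(1+\bar T_i/t)^p$ split into even and odd degrees. You instead exploit the closed-form Gamma-ratio expression for the MGF of $\log\operatorname{Beta}$ to prove a Bernstein-type bound on the centred CGF, combine by independence, Chernoff-optimize, and convert back to moments by the layer-cake formula. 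If carried out, your proof is more self-contained (no Latała, no Skorski), works directly with the true mean $\mu_{n,l_i}$ rather than passing through the auxiliary centring, and delivers the exponential tail estimate for $\sum_i\bar T_i$ that the paper proves next as a free by-product of the Chernoff step rather than via Markov's inequality applied to moments. The trade-off is that the one substantive lemma you leave as a sketch — the CGF bound $K_i(t)-t\mu_{n,l_i}\le\frac{t^2/(2M_i)}{1-c|t|/M_i}$ with precisely $M_i=\frac{n^2(n+l_i+2)}{(n+l_i)^2}$, uniformly in $l_i$ and over the whole admissible $t$-interval — carries essentially all of the weight of the proposition; in the paper this role is played by Skorski's inequality together with the linearization $e^s-1\ge s$, and reproducing it at the level of the CGF by summing the polygamma telescopes $\psi_k(n/2+t)-\psi_k((n+l_i)/2+t)$, in particular near $t\downarrow -n/2$ and in both regimes $l_i\ll n$ and $l_i\gg n$, is plausible but far from a triviality. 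Once that lemma is supplied, the Chernoff and layer-cake steps you describe are standard and do give the stated bound.
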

\begin{lemma}
There exists a universal constant $C$ so that
	\begin{equation}
		\left(\mathbb{E}\left[\left|\bar{T}_i\right|^p\right]\right)^{1 / p} \leq C\sqrt{ \frac{p}{M_i}} \leq C\sqrt{ \frac{p}{M}}.
   \end{equation}
\end{lemma}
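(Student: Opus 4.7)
The plan is to control $\|\bar T_i\|_{L^p}$ by analyzing the cumulants of $T_i$ through its explicit moment generating function and then converting cumulant estimates into a sub-Gaussian moment bound via Chernoff.

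Since $T_i \stackrel{d}{=} \log \operatorname{Beta}(n/2, l_i/2)$, I would start from the closed form
\begin{equation*}
\mathbb{E}[e^{sT_i}] \;=\; \frac{\Gamma(n/2+s)\,\Gamma((n+l_i)/2)}{\Gamma(n/2)\,\Gamma((n+l_i)/2+s)}, \qquad s > -\tfrac{n}{2},
\end{equation*}
which upon logarithmic differentiation identifies the cumulants as $\kappa_j(T_i) = \psi_{j-1}(n/2) - \psi_{j-1}((n+l_i)/2)$. The second cumulant is exactly $\sigma_{n,l_i}$. Rewriting each polygamma difference as an integral $\int_{n/2}^{(n+l_i)/2} (-\psi_j(t))\,dt$ and combining with the elementary bound $|\psi_j(t)| \leq C j!/t^j$ for $t \geq 1$, I would establish both $\sigma_{n,l_i} \leq C/M_i$ and the higher cumulant estimate $|\kappa_j| \leq C^{\,j}(j-2)!\, M_i^{-1}\, n^{-(j-2)}$ for $j \geq 3$.

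With these bounds, summing the Taylor series $\sum_{j \geq 2} s^j \kappa_j / j!$ on an interval $|s| \leq c n$ produces a log-MGF estimate $\log \mathbb{E}[e^{s\bar T_i}] \leq C s^2 / M_i$, and a Chernoff argument then yields the sub-Gaussian tail $\mathbb{P}(|\bar T_i| > t) \leq 2\exp(-ct^2 M_i)$ on the relevant range of $t$. Integrating $\mathbb{E}[|\bar T_i|^p] = \int_0^{\infty} p t^{p-1}\,\mathbb{P}(|\bar T_i| > t)\,dt$ against this tail then delivers the desired $(\mathbb{E}[|\bar T_i|^p])^{1/p} \leq C\sqrt{p/M_i}$. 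The second inequality $\sqrt{p/M_i} \leq \sqrt{p/M}$ is immediate: a direct derivative check shows $M_i = n^2(n+l_i+2)/(n+l_i)^2$ is monotonically decreasing in $l_i$, so $M_i \geq M = M_L$ uniformly in $i$.

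The main obstacle is extracting the factor $1/M_i$ (rather than the cruder $1/n$) uniformly across the regimes $l_i \ll n$ and $l_i \gtrsim n$. The quantity $1/M_i$ genuinely reflects the joint scaling in $n$ and $l_i$, so the polygamma integrals must be telescoped carefully rather than simply bounded by their value at the lower endpoint $t = n/2$; this is the step where the naive estimate loses a factor and extra care with the dependence on $l_i$ is needed.
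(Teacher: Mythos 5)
You take a genuinely different route to the same intermediate object. Both arguments reduce to establishing a sub-Gaussian tail $\mathbb{P}(|\bar T_i| > t) \lesssim \exp(-c M_i t^2)$ and then integrating $p t^{p-1}$ against it, but the paper obtains that tail by reducing to $\widehat T_i = T_i - \log\frac{n}{n+l_i}$, applying a Bernstein-type concentration inequality for the $\operatorname{Beta}(n/2,l_i/2)$ variable itself (Skorski's Theorem 1), and transferring it through $e^s - 1 \geq s$; you instead work directly with the log-Beta via its explicit Gamma-ratio moment generating function, read off the cumulants $\kappa_j = \psi_{j-1}(n/2)-\psi_{j-1}((n+l_i)/2)$, bound them by telescoped polygamma integrals, assemble a log-MGF bound on $|s|\leq cn$, and close with Chernoff. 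Your route is self-contained and makes the scaling transparent (one sees $\kappa_2 = \sigma_{n,l_i}$ exactly and sees why higher cumulants gain $n^{-(j-2)}$), whereas the paper's is shorter conditional on the cited Bernstein-for-Beta result. You also correctly identify the one subtlety both approaches share: because $T_i = \log\operatorname{Beta}(n/2,l_i/2)$ has only a sub-exponential left tail of rate $\sim n/2$, the sub-Gaussian bound on the MGF/tail cannot hold unrestrictedly — your formulation "$|s|\leq cn$" surfaces this honestly, and a reader following your draft would need to verify that the contribution of the range $t \gtrsim n/M_i$, where the tail is only sub-exponential, is dominated by the sub-Gaussian integral for the values of $p$ actually used downstream; the paper's proof has the same latent restriction without flagging it. Your observation that $M_i$ is decreasing in $l_i$ (hence $M_i \geq M = M_L$) matches the paper's closing remark exactly. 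Overall the proposal is a correct and somewhat more transparent alternative derivation of the same estimate.
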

\begin{proof}
We first make a reduction. We now verify that the estimates established in Lemma 1 for $\bar{T}_{i}=T_{i}-\mathbb{E}\left[\log \left(\operatorname{Beta}(n/2, l_i/2)\right) \right]$ can be derived directly from the corresponding estimates for
$$
\widehat{T}_{i}:=T_{i }-\log \left(\frac{n}{n+l_i}\right).
$$
We have
$$
\mathbb{E}\left[\log \left(\operatorname{Beta}(n/2, l_i/2)\right)\right]=\psi(n/2)-\psi((l_i+n)/2) =\log \left(\frac{n}{n+l_i}\right)+\varepsilon, \quad \varepsilon=O\left(n^{-1}\right),
$$
where $\psi$ is the digamma function, and we have used its asymptotic expansion $\psi(z) \sim$ $\log (z)+O\left(z^{-1}\right)$ for large arguments. Thus, we have for each $i$ that
$$
\mathbb{E}\left[\left|\bar{T}_{i}\right|^p\right]=\mathbb{E}\left[\left|\widehat{T}_{i}+\varepsilon\right|^p\right] \leq \sum_{k=0}^p\left(\begin{array}{l}
p \\
k
\end{array}\right) \mathbb{E}\left[\left|\widehat{T}_{i}\right|^k\right]\left|\varepsilon\right|^{p-k}.
$$
So assuming that $\widehat{T}_{i}$ satisfy the conclusion of Lemma 1 , we find
$$
\mathbb{E}\left[\left|\bar{T}_{i}\right|^p\right] \leq \sum_{k=0}^p\left(\begin{array}{l}
p \\
k
\end{array}\right) \zeta_{k}^k\left|\varepsilon\right|^{p-k}, \quad \zeta_{k}:=C \sqrt{\frac{k}{M}}. 
$$
Since for $0 \leq k \leq p$ we have $\zeta_{k} \leq \zeta_{p}$, we see that
$$
\mathbb{E}\left[\left|\bar{T}_{i}\right|^p\right] \leq \sum_{k=0}^p\left(\begin{array}{l}
p \\
k
\end{array}\right) \zeta_{p}^k\left|\varepsilon\right|^{p-k} \leq\left(\zeta_{p}+\left|\varepsilon\right|\right)^p .
$$
Finally, since $\varepsilon=O\left(n^{-1}\right)=o\left(\zeta_{p}\right)$, we find that there exists $C>0$ so that
$$
\left(\mathbb{E}\left[\left|\bar{T}_{i}\right|^p\right]\right)^{1 / p} \leq C \zeta_{p} \leq C \sqrt{\frac{p}{M_i}}
$$
as desired. It therefore remains to show that $\widehat{T}_{i}=T_{i}-\log  \left(\frac{n}{n+l_i}\right)$ satisfies the conclusion of Lemma 1. To do this, we begin by checking that  with $M_i$, for all $s \geq 0$
\begin{equation}
	\mathbb{P}\left(\left|T_{i}-\log \left(\frac{n}{n+l_i}\right)\right| \geq s\right) \leq 2 e^{-M_i s^2}.
\end{equation}
\\
We have
$$
\begin{aligned}
\mathbb{P}\left(\left|T_{i}-\log \left(\frac{n}{n+l_i}\right)\right| \geq s\right) & =\mathbb{P}\left(\left|\log \left(\operatorname{Beta}(n/2, l_i/2)\right)-\log \left(\frac{n}{n+l_i}\right)\right| \geq s\right) \\
& =\mathbb{P}\left(\left|\log \left(\frac{n+l_i}{n}\operatorname{Beta}(n/2, l_i/2)\right)\right| \geq s\right) \\
& =\mathbb{P}\left(\operatorname{Beta}(n/2, l_i/2) \geq \frac{n}{n+l_i}  e^s\right)+\mathbb{P}\left(\operatorname{Beta}(n/2, l_i/2) \leq \frac{n}{n+l_i}  e^{-s}\right) .
\end{aligned}
$$
Let us first bound $\mathbb{P}\left(\operatorname{Beta}(n/2, l_i/2) \geq \frac{n}{n+l_i} e^s\right)$. Notice that the mean of $\operatorname{Beta}(n/2, l_i/2)$ is $\frac{n}{n+l_i}$ and that $\operatorname{Beta}(n/2, l_i/2)$ are subgaussian random variables. Thus, we can use Bernstein's tail estimates for  Beta variables. According to Theorem 1 \cite{Sko},
\begin{theorem}[\cite{Sko}Theorem 1]
Let $X \sim \operatorname{Beta}(\alpha, \beta)$. Define the parameters
$$
\begin{aligned}
& v \triangleq \frac{\alpha \beta}{(\alpha+\beta)^2(\alpha+\beta+1)} ,\\
& c \triangleq \frac{2(\beta-\alpha)}{(\alpha+\beta)(\alpha+\beta+2)} .
\end{aligned}
$$
Then the upper tail of $X$ is bounded as
$$
\mathbf{P}\{X>\mathbf{E}[X]+\epsilon\} \leqslant \begin{cases}\exp \left(-\frac{\epsilon^2}{2\left(v+\frac{\epsilon \epsilon)}{3}\right)}\right), & \beta \geqslant \alpha ,\\ \exp \left(-\frac{\epsilon^2}{2 v}\right), & \beta<\alpha,\end{cases} 
$$
and the lower tail of $X$ is bounded as
$$
\mathbf{P}\{X<\mathbf{E}[X]-\epsilon\} \leqslant \begin{cases}\exp \left(-\frac{\epsilon^2}{2\left(v+\frac{\epsilon \epsilon}{3}\right)}\right) ,& \alpha \geqslant \beta ,\\ \exp \left(-\frac{\epsilon^2}{2 v}\right) ,& \alpha<\beta .\end{cases}
$$
\end{theorem} 
We have $v$ for $X \sim \operatorname{Beta}(n/2, l_i/2)$ is $\frac{1}{2(n+l_i+2)}$, then 
\begin{align*}
	\mathbf{P}\{X>\mathbf{E}[X]+\epsilon\}  \leqslant \exp \left(-\frac{\epsilon^2}{2 v}\right).
\end{align*}
 Thus 
 \begin{align*}
 	\mathbb{P}\left(\operatorname{Beta}(n/2, l_i/2) \geq \frac{n}{n+l_i}  e^s\right)&=\mathbb{P}\left(\operatorname{Beta}(n/2, l/2) -\frac{n}{n+l_i}\geq \frac{n}{n+l_i}  \left(e^s-1\right)\right)\\
 	& \leq \exp \left(-M_is^2\right),
 \end{align*} where we use $e^s-1 \geq s$ and define $M_i =\frac{n^2(n+l_i+2)}{(n+l_i)^2}$.
 $\mathbb{P}\left(\operatorname{Beta}(n/2, l_i/2) \leq \frac{n}{n+l_i}  e^{-s}\right)$ is similar to bound, thus we have 
 $$
\mathbb{P}\left(\left|T_{i}-\log \left(\frac{n}{n+l_i}\right)\right| \geq s\right) \leq 2 e^{-M_i s^2}.
$$
To complete the proof of lemma 1, we can write
$$
\begin{aligned}
\mathbb{E}\left[\left|\widehat{T}_{i}\right|^p\right] & =\int_0^{\infty} \mathbb{P}\left(\left|\widehat{T}_{i}\right|>x\right) p x^{p-1} d x \\
& \leq p\int_0^{\infty} e^{-cM_i x^2} x^{p-1} d x,
\end{aligned}
$$
the term can be estimated by comparing to the moments of a Gaussian as follows:

\begin{align}
p \int_0^{\infty} e^{-c M_i x^2} x^{p-1} d x & =p\left(2 c M_i\right)^{-p / 2} \int_0^{\infty} e^{-x^2} x^{p-1} d x  \notag\\
& \leq p\left(2 c M_i\right)^{-p / 2} \int_0^{\infty} e^{-x^2} x^{p-1} d x \notag\\
& \leq p\left(2 c M_i\right)^{-p / 2} 2^{\frac{p}{2}} \Gamma\left(\frac{p}{2}\right) \notag\\
& \leq p\left(\frac{p}{2 c M_i}\right)^{p / 2},
\end{align}

where we used that for $z>0$  we have $ \Gamma(z) \leq z^z.$
Taking $1/p$ powers, we find that there exists $C > 0$ so that
\begin{equation}
	\left(\mathbb{E}\left[\left|\widehat{T}_{i}\right|^p\right]\right)^{1 / p}  \leq C \sqrt{\frac{p}{M_i}}  \leq C\sqrt{ \frac{p}{M}},
\end{equation} for all $p \geq 1$, since $M_i$ is decreasing function with respect to $l_i$. This completes the proof.
\end{proof} 
\\
We now in a position to prove proposition 3 with lemma 1 in hand.
We introduce the following result of R.Latała.
\begin{theorem}
Let $X_1, \cdots, X_N$ be mean zero, independent r.v. and $p \geq 1$. Then
$$
\left(\mathbb{E}\left[\left|\sum_{j=1}^N X_j\right|^p\right]\right)^{\frac{1}{p}} \simeq \inf \left\{t>0: \sum_{j=1}^N \log \left[\mathbb{E}\left|1+\frac{X_j}{t}\right|^p\right] \leq p\right\},
$$
where $a \simeq b$ means there exist universal constants $c_1, c_2$ so that $c_1 a \leq b \leq a c_2$. Moreover, if $X_i$ are also identically distributed, then
$$
\left(\mathbb{E}\left[\left|\sum_{j=1}^N X_j\right|^p\right]\right)^{\frac{1}{p}} \simeq \sup _{\max \left\{2, \frac{p}{N}\right\} \leq s \leq p} \frac{p}{s}\left(\frac{N}{p}\right)^{\frac{1}{n}}\left\|X_i\right\|_s.
$$
\end{theorem}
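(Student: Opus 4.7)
The plan is to establish the two-sided equivalence $\|S_N\|_p \simeq t_0$ with universal constants, where I abbreviate $\|S_N\|_p := (\mathbb{E}|\sum_{j=1}^N X_j|^p)^{1/p}$ and $t_0 := \inf\{t > 0 : \sum_j \log \mathbb{E}|1 + X_j/t|^p \leq p\}$. My first step would be to reduce to symmetric summands. Since $\mathbb{E}X_j = 0$, introducing independent copies $X_j'$ gives $\|S_N\|_p \simeq \|\sum_j (X_j - X_j')\|_p$ by Jensen plus the standard symmetrization inequalities, and $\mathbb{E}|1 + X_j/t|^p$ is comparable (up to universal factors) to $\mathbb{E}|1 + (X_j - X_j')/t|^p$, so the defining infimum is stable under symmetrization. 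From here on I may assume each $X_j$ is symmetric, and $\log \mathbb{E}|1 + X_j/t|^p \geq 0$ by Jensen.

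For the upper bound $\|S_N\|_p \lesssim t_0$, I would use a Chernoff-type moment argument. Fix any $t > 0$ with $\prod_j \mathbb{E}|1 + X_j/t|^p \leq e^p$. The analytic heart is a product-type inequality of the form
\begin{equation*}
\mathbb{E}\bigl(|1 + S_N/t|^p + |1 - S_N/t|^p\bigr) \;\leq\; C_0 \prod_{j=1}^N \mathbb{E}|1 + X_j/t|^p,
\end{equation*}
which I would prove by iterated conditioning on $S_{j-1} := X_1 + \cdots + X_{j-1}$: symmetry of $X_j$ independently of $S_{j-1}$ lets one control $\mathbb{E}_{X_j}(|a + X_j/t|^p + |a - X_j/t|^p)$ multiplicatively in $|a|$ and in $\mathbb{E}|1 + X_j/t|^p$, so the estimate telescopes over $j$. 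Combined with Markov applied to $|1 \pm S_N/t|^p$ and integration of the resulting tails, this yields $\|S_N\|_p \leq C t$, hence $\|S_N\|_p \lesssim t_0$.

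For the lower bound $t_0 \lesssim \|S_N\|_p$, I would set $t^\ast := C' \|S_N\|_p$ with $C'$ a suitably large constant and verify the defining condition at $t^\ast$, which forces $t_0 \leq t^\ast$. The natural tool is the two-sided Rosenthal / Hoffmann--J\o rgensen inequality, which gives both $\|S_N\|_p \gtrsim \sqrt{p\sum_j \mathbb{E}X_j^2}$ and $\|S_N\|_p \gtrsim (\sum_j \mathbb{E}|X_j|^p)^{1/p}$. Splitting $\mathbb{E}|1 + X_j/t^\ast|^p$ according to whether $|X_j| \leq t^\ast$ or not, Taylor-expanding $|1 + X_j/t^\ast|^p$ to second order on the bounded event (the linear term vanishes by symmetry) and bounding the complement by $|X_j/t^\ast|^p$ yields
\begin{equation*}
\sum_j \log \mathbb{E}|1 + X_j/t^\ast|^p \;\lesssim\; p\,\frac{\sum_j \mathbb{E}X_j^2}{(t^\ast)^2} + \frac{\sum_j \mathbb{E}|X_j|^p}{(t^\ast)^p},
\end{equation*}
and both terms are at most $p$ once $C'$ is large.

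The iid consequence then follows by explicitly optimizing the general formula. Under a common distribution the defining condition collapses to $\mathbb{E}|1 + X_1/t|^p \leq e^{p/N}$; expanding this expectation as a polynomial in $1/t$ and observing that exactly one of the moments $\binom{p}{s}(N/t^s)\mathbb{E}|X_1|^s$ (for $s \in [\max\{2,p/N\}, p]$) saturates the constraint, one recovers $t \simeq \sup_s (p/s)(N/p)^{1/s}\|X_1\|_s$ after standard Lagrange-duality manipulations on log-moments. The main obstacle I anticipate is the product inequality used in the upper bound: for generic symmetric random variables (beyond Rademachers or Gaussians) establishing such a convex / hypercontractive estimate with a constant independent of $p$, $N$, and the underlying laws is the subtlest ingredient of Latała's original argument, and this is where essentially all of the technical work lives.
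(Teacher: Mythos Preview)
The paper does not prove this theorem at all: it is quoted verbatim as a result of Lata\l a (reference \cite{Lat} in the bibliography) and then applied as a black box to bound $(\mathbb{E}|\sum_i \bar T_i|^p)^{1/p}$ in Proposition~\ref{3}. There is therefore no ``paper's own proof'' to compare against; the only use the paper makes of the statement is the upper bound direction of the infimum characterization, which it verifies by exhibiting an admissible $t$.

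As for your sketch itself, it is a reasonable outline of Lata\l a's original argument, and you correctly isolate the multiplicative inequality
\[
\mathbb{E}\bigl(|1+S_N/t|^p+|1-S_N/t|^p\bigr)\le C_0\prod_{j=1}^N\mathbb{E}|1+X_j/t|^p
\]
as the technical core. Two places where your description is looser than what the actual proof requires: the lower bound in Lata\l a's paper is not obtained from Rosenthal/Hoffmann--J\o rgensen as an input but rather goes through a direct comparison using the Paley--Zygmund inequality and a careful truncation (invoking Rosenthal here is somewhat circular, since Lata\l a's theorem is itself a sharpening of Rosenthal); and in the iid reduction, the claim that ``exactly one of the moments saturates the constraint'' is not literally how the optimization works---one instead bounds the sum $\sum_s \binom{p}{s}\|X_1\|_s^s/t^s$ above and below by its largest term up to a factor depending only on $p$, which is where the range $\max\{2,p/N\}\le s\le p$ emerges. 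These are refinements rather than gaps, but since the paper under review treats the result as a citation, no proof is expected of you here.
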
 We know  that
\begin{align}
	\left(\mathbb{E}\left[\left|\sum_{i=1}^N \bar{T}_i\right|^p\right]\right)^{1 / p} \simeq \inf \left\{t>0: \sum_{j=1}^N \log \left[\mathbb{E}\left|1+\frac{\bar{T}_i}{t}\right|^p\right] \leq p\right\}.
	\end{align}
We will use the	notation :
$$
p_0=M_N^2 \sum_{j=1}^N M_j^{-1}.
$$
Since
\begin{equation}
	\sqrt{p \sum_{j=1}^N M_N^{-1}} \leq \frac{p}{M_N} \quad \Longleftrightarrow \quad p \geq p_0,
\end{equation}
we will show that there exists $C>0$ so that
\begin{equation}
	p \leq p_0 \quad \Longrightarrow \quad\left(\mathbb{E}\left|\sum_{i=1}^{N}\bar{T}_i\right|^p\right)^{\frac{1}{p}} \leq C \sqrt{p \sum_{j=1}^N M_j^{-1}}
\end{equation}
as well as
\begin{equation}
	p \geq p_0 \quad \Longrightarrow \quad\left(\mathbb{E}\left|\sum_{i=1}^{N}\bar{T}_i\right|^p\right)^{\frac{1}{p}} \leq C \frac{p}{M_N} .
\end{equation}
We may assume that without loss of generality that p is even, since we can use higher even moments to control odd moments. Here we recall a well-known estimate
\begin{equation*}
	\left(\frac{n}{k}\right)^k \leq\binom{ n}{k} \leq\left(\frac{n}{k}\right)^k e^k, \quad k \geq 1.
\end{equation*}
Since p is even, we can drop the absolute value,
\begin{align}
	\mathbb{E}\left[\left(1+\frac{\bar{T}_{i}}{t}\right)^p\right]&=1+\sum_{l=2}^p\binom{p}{l} \frac{\mathbb{E}\left[\bar{T}_{i}^{l}\right]}{t^{l}} \nonumber \\ 
	& \leq 1+\sum_{l=2}^p\binom{p}{l}\frac{1}{t^l}\left(C\frac{l}{M}\right)^l
	 \nonumber \\
	& \leq 1+\sum_{l=2}^p\left(\frac{p}{l}\right)^{\frac{l}{2}}\left(\frac{Cp}{t^2M}\right)^{\frac{l}{2}} \label{10}.
	\end{align}
We now bound the term in the previous line by breaking into two terms where l is 
even and odd. When l is even, the term in (\ref{10}) can be bounded by
\begin{align}
	1+\sum_{\substack{l=2\\ l \text{ even}}}^p\left(\frac{p}{l}\right)^{\frac{l}{2}}\left(\frac{Cp}{t^2M}\right)^{\frac{l}{2}} \leq 1+ \sum_{\substack{l=2\\ l \text{ even}}}^p\binom{\frac{p}{2}}{\frac{l}{2}}\left(\frac{Cp}{t^2M}\right)^{\frac{l}{2}} \leq \left(1+\frac{Cp}{t^2M}\right)^{\frac{l}{2}}.
\end{align}  	
When l is odd, we have fact that for any $0 \leq m \leq l \leq p $
\begin{equation}
	\left(\frac{p}{\ell}\right)^{\ell} \leq p^m\binom{p}{\ell-m} .
\end{equation}
Thus the odd term in (\ref{10}) is bounded by 
\begin{equation}
	\sum_{\substack{\ell=3 \\ \ell \text { odd }}}^{p}\left(\frac{p}{\ell}\right)^{\ell / 2}\left(\frac{C p}{t^2 M}\right)^{\ell / 2} \leq \min _{m=1,3}\left\{\left(\frac{C p^2}{t^2 M}\right)^{\frac{m}{2}} \sum_{\substack{\ell=3 \\ \ell-1}}^{p-1}\binom{p}{\ell-m}^{\frac{1}{2}}\left(\frac{C p}{t^2 M}\right)^{\frac{\ell-m}{2}}\right\} .
\end{equation}
To proceed, note that for any $0 \leq b \leq a$

$$
\binom{2 a}{2 b} \leq 2^b\binom{a}{b}^2 .
$$
This inequality follows by observing that for any $j=0, \ldots, b-1$, we have

$$
\frac{(2 a-2 j)(2 a-2 j-1)}{(2 b-2 j-1)}=\frac{(a-j)(a-j-1 / 2)}{(b-j)(b-j-1 / 2)} \leq 2\left(\frac{a-j}{b-j}\right)^2,
$$
and repeatedly applying this estimate to the terms in $\binom{2 a}{2 b}$. Thus, we obtain
\begin{align}
\sum_{\substack{\ell=3 \\
\ell \text { odd }}}^{p}\left(\frac{p}{\ell}\right)^{\ell / 2}\left(\frac{C p}{t^2 M}\right)^{\ell / 2} & \leq \min _{m=1,3}\left\{\left(\frac{C p^2}{t^2 M}\right)^{\frac{m}{2}}\right\} \sum_{\ell=0}^{p / 2}\binom{p / 2}{l}\left(\frac{C p}{t^2 M}\right)^{l} \notag\\
& =\min _{m=1,3}\left\{\left(\frac{C p^2}{t^2 M}\right)^{\frac{m}{2}}\right\}\left(1+\frac{C p}{t^2 M}\right)^{p / 2} \notag\\
& \leq\left(\frac{C p^2}{t^2 M}\right)\left(1+\frac{C p}{t^2 M}\right)^{p / 2},
\end{align}
where in the last inequality we've used that $\min \left\{x^{1 / 2}, x^{3 / 2}\right\} \leq x$ for all $x \geq 0$. In conclusion, we see that there exists $C>0$ so that
\begin{equation}
	\mathbb{E}\left[\left(1+\frac{\bar{T}_{i}}{t}\right)^p\right] \leq\left(1+\frac{C p^2}{t^2 M}\right)\left(1+\frac{C p}{t^2 M}\right)^{p / 2}.
\end{equation}
Hence, since $\log (a+b) \leq(\log a)+b$ for $a \geq 1$ and $b>0$,

\begin{align}
\sum_{j=1}^N \log \mathbb{E}\left[\left(1+\left(\frac{\bar{T}_{i}}{t}\right)^p\right)\right] & \leq \frac{p}{2} \sum_{j=1}^N \log \left(1+\frac{C p}{t^2 M}\right)+\sum_{j=1}^N \log \left(1+\frac{C p^2}{t^2 M}\right) \nonumber\\
& \leq \frac{p}{2} \sum_{j=1}^N \frac{C p}{t^2 M}+\sum_{j=1}^N \frac{C p^2}{t^2 M}  \label{11}.
\end{align}
When $p \leq p_0$, set $t=\sqrt{C^{\prime} p \sum_{i=1}^N M_i^{-1}}$
where
$$
C^{\prime}=\max \left\{(16 C)^2, 2 C^{1 / 2}\right\}, 
$$
when $p \geq p_0$, set $$
t=\frac{C^{\prime} p}{M_N}
$$
with
$$
C^{\prime}=\max \left\{4 C, 2 C^{1 / 2}\right\},
$$
we have the terms in (\ref{11}) is less than p, which completes the proof of Proposition \ref{3}.
\begin{proposition}
There exists a universal constant $c>0$ with the following property. Fix any vector $\theta$ in $\mathbb{R}^n$, we have
\begin{equation}\label{pro4}
	\mathbb{P}\left(\left|\frac{1}{N} \log \left\|X_{N, n}(\theta)\right\|-  \mu_{n, l}\right| \geq s\right) \leq 2 \exp \left\{-c  N \min \{\hat{M}s^2,M_Ns\}\right\}, \quad s>0.
\end{equation}
\end{proposition}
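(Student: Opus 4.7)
The plan is to turn the $p$th moment bound of Proposition \ref{3} into a two-sided tail bound via Markov's inequality, then optimize the moment order $p$ as a function of $s$ to recover the $\min\{\hat{M}s^2, M_Ns\}$ in the exponent. This is the standard route from Latała/Bernstein-type moment estimates to a Bernstein-type concentration inequality.

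First I would reduce to a one-dimensional sum of independent log-Beta variables. By the rotational invariance of the Haar measure on the orthogonal group, and hence of each $A_i$, for any fixed $\theta\in\mathbb{R}^n$ we may assume without loss of generality that $\theta=e_1$. Combining the identity in distribution $\log\|X_{N,n}(\Theta)\|\stackrel{d}{=}\sum_{i=1}^N\log\|A_i(\Theta)\|$ established earlier with $\|A_i(e_1)\|^2\stackrel{d}{=}\operatorname{Beta}(n/2,l_i/2)$ and the definition of $\mu_{n,l}$, the event in \eqref{pro4} reduces (up to absolute constants in the paper's convention) to the event $|S_N|\geq sN$, where $S_N:=\sum_{i=1}^N \bar T_i$ and $T_i:=\log(\operatorname{Beta}(n/2,l_i/2))$.

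Next I would combine Markov's inequality with Proposition \ref{3}. For any real $p\geq 1$,
\[
\mathbb{P}(|S_N|\geq sN)\leq\frac{\mathbb{E}|S_N|^p}{(sN)^p}\leq\left(\frac{C\sqrt{pN/M}}{sN}+\frac{Cp/M_N}{sN}\right)^{\!p}.
\]
The first summand inside the parenthesis governs the sub-Gaussian regime and the second governs the sub-exponential regime. To optimize, in the small-$s$ regime I would set $p\asymp \hat M N s^2$, which makes the right-hand side at most $\exp(-c_1\hat M N s^2)$; in the large-$s$ regime I would set $p\asymp M_N N s$, yielding $\exp(-c_2 M_N N s)$. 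Taking the worse of the two bounds and multiplying by $2$ to absorb the two-sided tail produces the asserted inequality; the crossover $\hat M s^2=M_N s$ is exactly where the two regimes meet and is reflected by the $\min$ in the statement.

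The main obstacle is purely algebraic bookkeeping: reconciling the three parameters $M_i$, $M=\min_i M_i$ and $M_N$ that appear in Proposition \ref{3} so that the effective constant $\hat{M}$ controlling the Gaussian regime is identified as the right "variance-like" quantity (one expects $\hat{M}^{-1}$ to be comparable to $\frac{1}{N}\sum_j M_j^{-1}$), while the sub-exponential tail is governed by $M_N^{-1}$. Since Proposition \ref{3} is valid for all real $p\geq 1$, no integer rounding is needed, and no new probabilistic ingredient is required beyond the moment bound already proved.
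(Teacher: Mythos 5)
Your route is exactly the paper's: apply Markov's inequality to the $L^p$ bound of Proposition~\ref{3}, then choose $p$ differently in the small-$s$ (sub-Gaussian) and large-$s$ (sub-exponential) regimes, with the crossover at $\hat{M}s^2 = M_N s$ (equivalently $p = p_0$), and take a union of the two tails. One caveat worth flagging: you read $\hat{M}$ as the quantity with $\hat{M}^{-1}\asymp\tfrac{1}{N}\sum_j M_j^{-1}$, i.e., a harmonic-mean-type $\hat{M}\asymp N/\sum_j M_j^{-1}$; this is the reading under which the computation $p\asymp \hat{M}Ns^2 \Rightarrow \text{base}\asymp C\sqrt{p\sum_j M_j^{-1}}/(sN)\le e^{-1}$ closes and gives $e^{-c\hat{M}Ns^2}$, whereas the paper literally sets $\hat{M}=\sum_j M_j^{-1}$ (the sum, not its reciprocal times $N$). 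With the paper's literal definition the chain $s=\tfrac{C}{N}\sqrt{p\hat{M}}\Rightarrow p=s^2N^2/(C^2\hat{M})$ yields exponent $-cN^2 s^2/\hat{M}$, not $-cN\hat{M}s^2$; your version is the internally consistent one, so you have in effect corrected a definitional slip rather than deviated from the argument. Also note that to realize the sharper $\hat{M}$-dependence you should carry the middle term $C\bigl(\sqrt{p\sum_j M_j^{-1}}+p/M_N\bigr)$ from Proposition~\ref{3} through Markov, rather than the looser $\sqrt{pN/M}$ you wrote in the display; otherwise the Gaussian regime only produces the weaker constant $M=\min_j M_j$ in place of the harmonic mean.
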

\begin{proof}
To prove Proposition 4,
we can see that Proposition 4 is equivalent to showing that for any $s>0$
\begin{equation}
	\mathbb{P}\left(\left|\frac{1}{ N} \sum_{i=1}^N \bar{T}_i\right| \geq s\right) \leq 2 \exp \left\{-c  N Ms^2\right\}.
\end{equation}
We remind the readers here that we will use the	notation :
$$
p_0=M_N^2 \sum_{j=1}^N M_j^{-1},
$$
and note that 
$$
\sqrt{p \sum_{j=1}^N M_j^{-1}} \leq \frac{p}{M_N} \quad \Longleftrightarrow \quad p \geq p_0.
$$
Thus, applying Markov's inequality to Proposition \ref{3} shows that there exists $C>0$ so that for $ 1 \leq p \leq p_0$
\begin{equation}
	\mathbb{P}\left(\left|\frac{1}{ N} \sum_{i=1}^N \bar{T}_i\right| \geq \frac{C}{N} \sqrt{p \sum_{j=1}^N \frac{1}{M_j}}\right) \leq e^{-p}.
\end{equation}
Equivalently, set 
$$
\hat{M}=\sum_{j=1}^N \frac{1}{M_j},
$$
we see that there exists $c>0$ so that
\begin{equation}
	\mathbb{P}\left(\left|\frac{1}{ N} \sum_{i=1}^N \bar{T}_i\right| \geq s\right) \leq 2 e^{-c N \hat{M}s^2}, \quad 0 \leq s \leq \frac{C}{N}M_N\hat{M}.
\end{equation}
This establishes (\ref{pro4}) in this range of $s$. To treat $s \geq \frac{C}{N}M_N\hat{M}$, we again apply Markov's inequality to Proposition \ref{3} to see that there exists $C>0$ so that

$$
p \geq  p_0 \Longrightarrow \mathbb{P}\left(\left|\sum_{i=1}^N \bar{T}_i\right|>C \frac{p}{M_N}\right) \leq e^{-p}.
$$
Hence, there exists $c>0$ so that

\begin{equation}
	\mathbb{P}\left(\left|\frac{1}{N} \sum_{i=1}^N \bar{T}_i\right| \geq s\right) \leq e^{-c  N M_Ns}, \quad s \geq \frac{C}{N}M_N\hat{M},
\end{equation}
completing the proof.
\end{proof}\\
Turning to the probability in (9), recall that in \cite{hahn}Proposition 8.1, we have shown that for every $\varepsilon \in(0,1)$,
\begin{equation}
	\mathbb{P}\left(\left|\frac{1}{n N} \log \left\|X_{N, n}(\Theta)\right\|-\frac{1}{n} \sum_{i=1}^k \lambda_i\right| \geq \frac{k}{2 N n} \log \left(\frac{n}{k \varepsilon^2}\right)\right) \leq(C \varepsilon)^{\frac{k}{2}} .
\end{equation}
If we set $s:=\frac{k}{n N} \log \frac{e n}{k \varepsilon^2}$, then
$$
(C \varepsilon)^{k / 2}=\exp \left[-\frac{1}{4} s n N+\frac{k}{4} \log \left(\frac{e n}{k}\right)+\frac{k}{2} \log (C)\right] .
$$
Hence, assuming that
$$
s \geq C^{\prime} \frac{k}{n N} \log \left(\frac{e n}{k}\right)
$$
for $C^{\prime}$ sufficiently large, we arrive to the following expression:
\begin{equation}
	\mathbb{P}\left(\left|\frac{1}{n} \sum_{i=1}^k \lambda_i-\frac{\log \left\|X_{N, n}(\Theta)\right\|}{n N}\right| \geq s\right) \leq e^{-\frac{s n N}{4}}, \quad s \geq C^{\prime} \frac{k}{n N} \log \left(\frac{e n}{k}\right) .
	\end{equation}
Now we prove Theorem 1, we follow the proof of Theorem 1.3 in \cite{hahn}, 
where $\mu_{n}, \Sigma_{n,l}=\frac{1}{N}\sigma_{n, l}^2$ are the mean and variance of
$
 \frac{1}{2} \log \left(\operatorname{Beta}(n/2, l/2)\right),
$
\begin{align}
	d\left(\lambda_1, \mathcal{N}\left(\mu_{n},  \Sigma_{n,l}\right)\right)\leq & 3 d\left(\widehat{S}_1, \mathcal{N}\left( \mu_{n},  \Sigma_{n,l}\right)\right)+c_0 \delta\left\|\left( \Sigma_{n, l} \right)^{-1}\right\|_{H S}^{1 / 2} \notag\\ & +2 \mathbb{P}\left(\left\|S_1-\widehat{S}_1\right\|>\delta\right) \notag \\ = & 3 d\left(\widehat{S}_1, \mathcal{N}\left(\mu_{n}, \Sigma_{n,l}\right)\right)+c_0 \delta\left\|\left( \Sigma_{n,l}\right)^{-1}\right\|_{H S}^{1 / 2} \notag \\ & +2 \mathbb{P}\left(\left\|S_1-\widehat{S}_1\right\|>\delta\right),
\end{align} 
where $\widehat{S}_1=\frac{1}{N} \log \left\|X_{N, n}(\Theta)\right\|.$
For part 1, we use Theorem \ref{1.5} and $k=1$,$$
\frac{1}{N} \log \left\|X_{N, n}(\Theta)\right\|=\frac{1}{N} \sum_{i=1}^N Y_i,
$$where $Y_i \sim \frac{1}{2}  \log \operatorname{Beta}(n/2, l/2)$, and $Y_i$ is log concave variable. Furthermore,
\begin{equation}
	\left(\mathbb{E}\|(\Sigma_{n,l})^{-\frac{1}{2}}\bar{Y_j}\|_2^3\right)^{\frac{1}{3}} \leq C\left(\mathbb{E}\|(\Sigma_{n,l})^{-\frac{1}{2}}\bar{Y_j}\|_2^2\right)^{\frac{1}{2}}=C . 
\end{equation}
Thus we have 
\begin{equation}
	\beta_i=\frac{1}{N^{\frac{3}{2}}} \mathbb{E}\|(\Sigma_{n,l})^{-\frac{1}{2}}\bar{Y_j}\|_2^3 \leq \frac{C^3}{N^{\frac{3}{2}}} 1 \leq i \leq N .
\end{equation}
Therefore,
$$
\beta:=\sum_{j=1}^N \beta_j \leq \frac{C^3}{N^{\frac{1}{2}}},
$$
and we conclude that there exists an absolute constant $c>0$ so that
\begin{equation}
	d\left(\widehat{S}_1, \mathcal{N}\left(\mu_{n}, \Sigma_{n,l}\right)\right) \leq c N^{-1 / 2} .
	\end{equation}
For part 2, 
\begin{align}
	\left\|\left( \Sigma_{n,l}\right)^{-1}\right\|_{H S} \leq \frac{N}{\psi_1(n/2)-\psi_1((n+l)/2)} \sim \frac{n(n+l)N}{2l},
\end{align}
 where we have used its asymptotic expansion $\psi_1(z) \sim \frac{1}{z}+O\left(z^{-2}\right)$ for large arguments and $\psi_1(z)$ is deceasing function for $z>0$, then
\begin{align}
	\left\|\left( \Sigma_{n,l}\right)^{-1}\right\|_{H S}^{\frac{1}{2}} \sim
\sqrt{\frac{n(n+l)N}{2l}}.
\end{align}
For part 3, it is same as [2]. For any collection positive real numbers $\delta>C \frac{1}{N} \log \left(en\right) $ , we therefore have
\begin{align*}
	\mathbb{P}\left(\left\|S_1-\widehat{S}_1\right\|_2 \geq \delta \right)  \leq 2 e^{-\delta N / 4}.
\end{align*}
Setting
$$
\delta:=\frac{C }{N} \log \left(e n\right) \log \left(\frac{N}{n}\right)
$$
for a sufficiently large constant $C$ we find
\begin{equation}
	\mathbb{P}\left(\left|S_{1}-\widehat{S}_{1}\right| \geq \delta_j\right) \leq 2 e^{-C \log (e n ) \log (N / n)} \leq 2(n / N)^{1 / 2} .
\end{equation}
Hence, as soon as $N>n$, we have
\begin{equation}
	\mathbb{P}\left(\left\|\mid S_k-\widehat{S}_k\right\|_2 \geq \delta\right) \leq C\left(\frac{n}{N}\right)^{1 / 2},
\end{equation}
where
$$
\delta \leq \frac{C \log (n) \log (N / n)}{N}.
$$
In conclusion,
\begin{align}
	d\left(\lambda_1, \mathcal{N}\left(\mu_{n},  \Sigma_{n,l}\right)\right)
	&\leq C \sqrt{\frac{1}{N}}+\left(\frac{C\log^2 n \log^2 (N/n) n(n+l)}{2lN}\right)^{1/2}+C\left(\frac{n}{N}\right)^{1 / 2}\\
	&\leq \left(\frac{4C\log^2 n \log^2 (N/n) n(n+l)}{2lN}\right)^{1/2}.
\end{align}
\section{Proof for Theorem \ref{1.2}}
For k dimensions, we remind the reader that 
\begin{equation}
	A_i(\Theta)=A_i\theta_1 \wedge \cdots \wedge A_i\theta_k,
\end{equation}  
where $\theta_i$ is a fixed k-frame in $\RR^n$. Here we set $\Theta$ is the standard k-frames, then the Gram identity reads
\begin{align}
	\left\|A(\Theta)\right\|=\det(A_{(k)}^*A_{(k)})=\sum_{\sigma \in S_k}(-1)^{\left|\sigma\right|}\sum_{j_1,j_2 \cdots j_k=1}^{n}\Pi_{i=1}^{k}A_{j_ii}A_{j_i\sigma(i)},
\end{align}
where $A_{(k)}$ denotes the matrix composed of the first k columns of matrix $A$ and the second equation comes from the definition of matrix determinant.
With some observation, we have the following lemma 
                                                                                                                                                                                                                                                                                                                                                                                                                                                                                                                                                                                                                                                                                                                                                                                                                                                                                            
\begin{lemma}For any k 
	\begin{align*}
Z=\det(A_{(k)}^*A_{(k)}) =\sum_{\sigma \in S_k}(-1)^{\left|\sigma\right|}\sum_{j_1\neq j_2 \neq \cdots \neq j_k}\Pi_{i=1}^{k}A_{j_ii}A_{j_i\sigma(i)}
\end{align*}
\end{lemma}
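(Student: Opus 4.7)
The plan is simply to show that whenever two of the row-indices $j_1,\ldots,j_k$ coincide, the inner sum over $\sigma\in S_k$ already vanishes, so the unrestricted sum in the displayed identity for $\det(A_{(k)}^{*}A_{(k)})$ collapses onto the distinct-index configurations.

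First I would fix a tuple $(j_1,\ldots,j_k)$ in which $j_a=j_b$ for some $a\neq b$, and isolate its contribution
$$
\sum_{\sigma\in S_k}(-1)^{|\sigma|}\prod_{i=1}^{k}A_{j_i i}A_{j_i\sigma(i)}.
$$
The factor $\prod_{i=1}^{k}A_{j_i i}$ does not depend on $\sigma$, so it pulls outside the $\sigma$-sum, leaving the quantity
$$
\Bigl(\prod_{i=1}^{k}A_{j_i i}\Bigr)\sum_{\sigma\in S_k}(-1)^{|\sigma|}\prod_{i=1}^{k}A_{j_i\sigma(i)}.
$$
The remaining antisymmetric sum is, by the standard Leibniz expansion of a determinant, exactly $\det B$ for the $k\times k$ matrix $B$ with entries $B_{i\ell}=A_{j_i\ell}$; this is the submatrix of $A_{(k)}$ obtained by taking rows $j_1,\ldots,j_k$ in order. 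Because $j_a=j_b$, rows $a$ and $b$ of $B$ coincide, so $\det B=0$. Equivalently, one can produce this cancellation by a sign-reversing involution, pairing each $\sigma$ with $\sigma'=(a\,b)\circ\sigma$, whose contribution has opposite sign and equal magnitude.

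Since every term violating the distinctness constraint vanishes, one may restrict the inner summation to tuples with $j_1,\ldots,j_k$ pairwise distinct, which is the claimed identity. There is essentially no obstacle here: the only subtlety is to notice that the product $\prod_{i=1}^{k} A_{j_i i}$, which does involve the row indices $j_i$, is nevertheless $\sigma$-independent, so the antisymmetrization in $\sigma$ acts only on the second factor $\prod_{i=1}^{k} A_{j_i\sigma(i)}$ and yields a two-equal-rows determinant that kills the term.
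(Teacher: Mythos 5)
Your proof is correct, and it is essentially the same argument as the paper's, just packaged more cleanly. The paper proceeds by showing directly that a summand with $j_1=j_2$ pairs off with the summand for $\sigma\circ(1\,2)$ to give a sign-reversing involution; you instead pull out the $\sigma$-independent factor $\prod_i A_{j_i i}$ and recognize the remaining antisymmetrized sum as $\det B$ with $B_{i\ell}=A_{j_i\ell}$, a $k\times k$ matrix with two equal rows, hence zero. This is the same cancellation, just invoked via the standard determinant fact rather than exhibited by hand, and it is a slightly tidier way to organize the reasoning since it works tuple by tuple.

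One small slip in your secondary remark: you write the sign-reversing involution as $\sigma'=(a\,b)\circ\sigma$, i.e.\ postcomposition by the transposition. That is the involution adapted to \emph{equal columns}; for it the products $\prod_i B_{i\sigma(i)}$ and $\prod_i B_{i\sigma'(i)}$ need not coincide when it is the \emph{rows} $a$ and $b$ that agree, which is the situation here. The correct pairing is $\sigma'=\sigma\circ(a\,b)$ (right composition, i.e.\ precomposition), exactly as the paper writes $\sigma'=\sigma(1\,2)$: then $\sigma'$ swaps the values at positions $a$ and $b$, and $B_{a\sigma(b)}B_{b\sigma(a)}=B_{b\sigma(b)}B_{a\sigma(a)}$ follows from $B_{a\cdot}=B_{b\cdot}$. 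This does not affect your main determinant argument, which already gives the conclusion, but the ``equivalently'' as stated would not.
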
 
\begin{proof}
	Without loss of generation, assume $j_1=j_2$. We can add cycle (12) on $\sigma$,  define 
	        \begin{equation*}
	        	\sigma^{'}:= \sigma (12),
	        \end{equation*}
then we have 
\begin{equation*}
	A_{j_ii}s_{j_i}A_{j_i\sigma^{'}(i)}=A_{j_ii}A_{j_i\sigma(i)}.
\end{equation*}
Moreover, take
\begin{align*}
	B&=\sum_{\sigma \in S_k}(-1)^{\left|\sigma \right|}\sum_{j_1,\cdots,j_k,j_1=j_2}\Pi_{i=1}^{k}A_{j_i} A_{j_i  \sigma(i)}\\
	&=\sum_{\sigma \in S_k}(-1)^{\left|\sigma \right|}\sum_{j_1,\cdots,j_k,j_1=j_2}\Pi_{i=1}^{k}A_{j_i} A_{j_i  \sigma^{'}(i)}\\
	&=\sum_{\sigma^{'} \in S_k}(-1)^{\left|\sigma \right|}\sum_{j_1,\cdots,j_k,j_1=j_2}\Pi_{i=1}^{k}A_{ji}A_{j_i\sigma^{'}(i)}\\
	&=-\sum_{\sigma^{'} \in S_k}(-1)^{\left|\sigma^{'} \right|}\sum_{j_1,\cdots,j_k,j_1=j_2}\Pi_{i=1}^{k}A_{ji}A_{j_i\sigma^{'}(i)}\\
	&=-B,
\end{align*}
then $B=0$, which ends the proof.	        
\end{proof}\\
 We want to control the moments of Z.
 \begin{align}
 	\EE Z&=\EE \sum_{\sigma \in S_k}(-1)^{\left|\sigma\right|}\sum_{j_1\neq j_2 \neq \cdots \neq j_k}\Pi_{i=1}^{k}A_{j_ii}A_{j_i\sigma(i)} \notag\\
 	    &=\sum_{\sigma \in S_k}(-1)^{\left|\sigma\right|}\sum_{j_1\neq j_2 \neq \cdots \neq j_k}\EE \Pi_{i=1}^{k}A_{j_ii}A_{j_i\sigma(i)}.
 \end{align}
Here, we need to recall a main result about integration with respect to the Haar 
measure on orthogonal group,(see \cite{collins1} Theorem3.13)
\begin{proposition}
	
 Suppose $N \geq n$. Let $g=\left(g_{i j}\right)_{1 \leq i, j \leq N}$ be a Haar-distributed random matrix from $O(N)$ and let dg the normalized Haar measure on $O(N)$. Given two functions $\boldsymbol{i}, \boldsymbol{j}$ from $\{1,2, \ldots, 2 n\}$ to $\{1,2, \ldots, N\}$, we have

$$
\begin{aligned}
& \int_{g \in O(N)} g_{i(1) j(1)} g_{i(2) j(2)} \cdots g_{i(2 n) j(2 n)} d g \\
= & \sum_{\mathrm{m}, \mathrm{n} \in \mathcal{M}(2 n)} \mathrm{Wg}_n^{O(N)}\left(\mathrm{m}^{-1} \mathrm{n}\right) \prod_{k=1}^n \delta_{i(\mathrm{~m}(2 k-1)), i(\mathrm{~m}(2 k))} \delta_{j(\mathrm{~m}(2 k-1)), j(\mathrm{~m}(2 k))}.
\end{aligned}
$$

Here we regard $\mathcal{M}(2 n)$ as a subset of $S_{2 n}$.
As a special case , we obtain an integral expression for $\mathrm{Wg}_n^{O(N)}(\sigma)$ :

$$
\mathrm{Wg}_n^{O(N)}(\sigma)=\int_{g \in O(N)} g_{1 j_1} g_{1 j_2} g_{2 j_3} g_{2 j_4} \cdots g_{n j_{2 n-1}} g_{n j_{2 n}} d g, \quad \sigma \in S_{2 n}
$$

with

$$
\left(j_1, j_2, \ldots, j_{2 n}\right)=\left(\left\lceil\frac{\sigma(1)}{2}\right\rceil,\left\lceil\frac{\sigma(2)}{2}\right\rceil, \ldots,\left\lceil\frac{\sigma(2 n)}{2}\right\rceil\right).
$$
\end{proposition}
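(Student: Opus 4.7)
The plan is to exploit the bi-invariance of the Haar measure on $O(N)$. Set
$$
I(\boldsymbol{i},\boldsymbol{j}) := \int_{O(N)} g_{i(1)j(1)} \cdots g_{i(2n)j(2n)}\, dg.
$$
Invariance under the substitution $g \mapsto u g v$ for any $u,v \in O(N)$ forces $I(\boldsymbol{i},\boldsymbol{j})$ to be an invariant tensor for the diagonal $O(N)\times O(N)$-action on the $2n$-fold tensor power of the defining representation, separately in the index function $\boldsymbol{i}$ and in the index function $\boldsymbol{j}$. The first fundamental theorem of invariant theory for the orthogonal group then says that this space of invariants is spanned by the pair-partition tensors
$$
\Delta_{\mathrm{m}}(\boldsymbol{i}) := \prod_{k=1}^n \delta_{i(\mathrm{m}(2k-1)),\, i(\mathrm{m}(2k))}, \qquad \mathrm{m} \in \mathcal{M}(2n),
$$
with an analogous basis in the $\boldsymbol{j}$-variables. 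Consequently there exist coefficients $c(\mathrm{m},\mathrm{n})$, depending only on the matching data, such that
$$
I(\boldsymbol{i},\boldsymbol{j}) = \sum_{\mathrm{m},\mathrm{n} \in \mathcal{M}(2n)} c(\mathrm{m},\mathrm{n})\, \Delta_{\mathrm{m}}(\boldsymbol{i})\, \Delta_{\mathrm{n}}(\boldsymbol{j}).
$$

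Next I would pin down $c(\mathrm{m},\mathrm{n})$ by introducing the Gram matrix
$$
G(\mathrm{m},\mathrm{n}) := \sum_{\boldsymbol{i}:\{1,\ldots,2n\}\to\{1,\ldots,N\}} \Delta_{\mathrm{m}}(\boldsymbol{i})\, \Delta_{\mathrm{n}}(\boldsymbol{i}),
$$
which evaluates to $N^{\ell(\mathrm{m},\mathrm{n})}$, where $\ell(\mathrm{m},\mathrm{n})$ is the number of loops in the multigraph $\mathrm{m}\cup \mathrm{n}$ and depends only on the cycle structure of $\mathrm{m}^{-1}\mathrm{n}$. Testing the expansion against each $\Delta_{\mathrm{n}'}(\boldsymbol{i})$ and summing over $\boldsymbol{i}$ produces a linear system for $c(\mathrm{m},\mathrm{n})$ whose (pseudo-)solution is, by definition of the Weingarten function, precisely $\mathrm{Wg}_n^{O(N)}(\mathrm{m}^{-1}\mathrm{n})$. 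The second displayed formula of the proposition, the integral representation of $\mathrm{Wg}_n^{O(N)}(\sigma)$, then follows by specializing $\boldsymbol{i}$ to the configuration with $i(2k-1)=i(2k)=k$, so that only the reference matching contributes on the $\boldsymbol{i}$-side, and reading off the surviving coefficient.

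The main obstacle is the invertibility of $G$. When $N$ is large relative to $n$ the matching tensors are linearly independent and $G$ is genuinely invertible, so the argument above is clean; but in the rank-deficient regime one must invoke the Moore-Penrose pseudo-inverse and verify that the kernel of $G$ is compatible with the actual span of tensors arising on the left-hand side. This compatibility check, carried out carefully by Collins and collaborators, is the technical core of the proof. A routine but necessary side step is to verify that $c(\mathrm{m},\mathrm{n})$ depends only on $\mathrm{m}^{-1}\mathrm{n}$, which follows from the transitive action of the hyperoctahedral group $H_n \subset S_{2n}$ on ordered matching pairs that share a fixed relative partition.
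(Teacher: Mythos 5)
The paper does not supply a proof of this proposition; it is quoted as Theorem~3.13 of Collins and \'{S}niady \cite{collins1}, and the surrounding text only records the large-$N$ expansion of $\mathrm{Wg}_n^{O(N)}$ that is used later. Your outline is essentially the proof strategy of that reference, and it is correct in substance: bi-invariance of Haar measure identifies $I(\boldsymbol{i},\boldsymbol{j})$ with an $O(N)$-invariant (separately in each tensor leg), the first fundamental theorem of orthogonal invariant theory supplies the spanning set of matching tensors $\Delta_{\mathrm{m}}$, and the coefficients are determined by the Gram matrix $G(\mathrm{m},\mathrm{n})=N^{\ell(\mathrm{m},\mathrm{n})}$, whose pseudo-inverse entries are by definition $\mathrm{Wg}_n^{O(N)}$. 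The one step you should spell out is \emph{why} contracting against a matching tensor produces a usable linear system: the orthogonality relation $gg^{\top}=\mathrm{Id}$ gives the pointwise identity $\sum_{\boldsymbol{j}}\bigl(\prod_k g_{i(k)j(k)}\bigr)\Delta_{\mathrm{n}'}(\boldsymbol{j})=\Delta_{\mathrm{n}'}(\boldsymbol{i})$, and integrating and inserting your expansion yields $\sum_{\mathrm{n}}c(\mathrm{m},\mathrm{n})\,G(\mathrm{n},\mathrm{n}')=\delta_{\mathrm{m},\mathrm{n}'}$ whenever the $\Delta_{\mathrm{m}}$ are linearly independent, exhibiting $c$ as $G^{-1}$ (or the pseudo-inverse in the degenerate range). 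Your remarks about the rank-deficient regime, the Moore--Penrose pseudo-inverse, and the dependence of $c(\mathrm{m},\mathrm{n})$ only on the coset type $\mathrm{m}^{-1}\mathrm{n}$ via the hyperoctahedral action are all on target and are precisely the technical points Collins and \'{S}niady address. With the contraction identity made explicit, this would be an acceptable reconstruction of the cited theorem, though of course the paper itself simply imports it.
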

Collins and Śniady \cite{collins1} obtained
$$
\mathrm{Wg}_n^{O(N)}(\sigma)=(-1)^{|\mu|} \prod_{i \geq 1} \mathrm{Cat}_{\mu_i} \cdot N^{-n-|\mu|}+\mathrm{O}\left(N^{-n-|\mu|-1}\right), \quad N \rightarrow \infty,
$$
where $\sigma$ is a permutation in $S_{2 n}$ of reduced coset-type $\mu$, which
implies the permutation $\sigma$ for which $\mathrm{Wg}(\sigma)$ will have the largest order is the only one satisfying $|\sigma|=0$, i.e. $\sigma=i d$.\\
Furthermore, S. Matsumoto obtain a more precise result for the expansion of $\mathrm{Wg}(\sigma)$(\cite{Matsu11}). 
Given a partition $\mu$, we define $\mathrm{Wg}^{O(N)}(\mu ; n)=\mathrm{W}_n^{(N)}(\sigma)$, where $\sigma$ is a permutation in $S_{2 n}$ of reduced coset-type $\mu$. For example,
\begin{align}
	\mathrm{Wg}^{O(N)}((0) ; n)&=\mathrm{Wg}^{O(N)}\left(\mathrm{id}_{2 n}\right)=N^{-n}+n(n-1) N^{-n-2}-n(n-1) N^{-n-3}+\mathrm{O}\left(N^{-n-4}\right)\\
	\mathrm{Wg}^{O(N)}((1) ; n)&=-N^{-n-1}+N^{-n-2}-\left(n^2+3 n-7\right) N^{-n-3}+\mathrm{O}\left(N^{-n-4}\right)
\end{align}
In our case, only when $\sigma(i)=i$, $\mathrm{Wg}(\sigma)$ will have the largest order, then 
\begin{figure}[h]
	\centering
	\begin{tikzpicture}[node distance=1.5cm]
    \node (1a) at (0,1) {1};
    \node (1b) at (1,1) {1};
    \node (2a) at (2,1) {2};
    \node (2b) at (3,1) {2};

    \node (j1a) at (0,0) {$j_1$};
    \node (j1b) at (1,0) {$j_1$};
    \node (j2a) at (2,0) {$j_2$};
    \node (j2b) at (3,0) {$j_2$};

    \draw [red,thick](1a.north) to[out=90,in=90,looseness=1.5] (1b.north);
    
    \draw [red,thick](2a.north) to[out=90,in=90,looseness=1.5] (2b.north);
    
    \draw[blue, thick] (j1a.south) to[out=-90,in=-90,looseness=1.5] (j1b.south);
    
    \draw [blue,thick](j2a.south) to[out=-90,in=-90,looseness=1.5] (j2b.south);
\end{tikzpicture}
\caption{}
\end{figure}
\begin{align}\label{identity}
\EE\Pi_{i=1}^{k}A_{j_ii}A_{j_ii}=\mathrm{Wg}^{O(n)}((0) ; k)=n^{-k}+k(k-1)n^{-k-2}-k(k-1)n^{-k-3}+\mathrm{O}\left(n^{-k-4}\right).
\end{align}
If $|\sigma|=1$, we have
\begin{align*}
	\mathrm{Wg}^{O(n)}((1) ; k)&=-n^{-k-1}+n^{-k-2}-\left(k^2+3 k-7\right) n^{-k-3}+\mathrm{O}\left(n^{-k-4}\right).
\end{align*}
Directly, when k is finite we have
\begin{equation}
	\EE Z=\frac{n(n-1)\cdots(n-k+1)}{n^k}+\OO(n^{-1})=1+\OO(n^{-1}),
\end{equation}
which comes from the fact tha the size of set $\{j_1\neq j_2 \neq \cdots \neq j_k\}$ is $n(n-1)\cdots(n-k+1)$.
Next, we need to estimate the variance and fourth moment of Z.
\begin{proposition}For finite k and large n, we have
 \begin{align*}
 	\Var Z=\OO(n^{-1}) 
 \end{align*}	
\end{proposition}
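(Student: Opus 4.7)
My plan is to estimate $\EE Z^{2}$ via Weingarten calculus applied to the $4k$-fold product that arises from squaring the identity in Lemma~2, following the same approach already used for $\EE Z$, and then to compare with $(\EE Z)^{2}$. Starting from Lemma~2, I would write
\begin{equation*}
Z^{2}=\sum_{\sigma,\tau\in S_{k}}(-1)^{|\sigma|+|\tau|}\sum_{\substack{j_{1}\neq\cdots\neq j_{k}\\ j'_{1}\neq\cdots\neq j'_{k}}}\prod_{i=1}^{k}A_{j_{i}i}A_{j_{i}\sigma(i)}A_{j'_{i}i}A_{j'_{i}\tau(i)},
\end{equation*}
and apply Proposition~10 to each inner expectation. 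For every fixed $(\sigma,\tau,j,j')$ this produces a sum over pair matchings $(\mathrm{m},\mathrm{n})\in\mathcal{M}(4k)^{\times 2}$ of $\mathrm{Wg}^{O(n)}(\mathrm{m}^{-1}\mathrm{n})$ weighted by delta constraints on rows and columns.

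I would then split the configurations into Case A, where $\{j_{1},\dots,j_{k}\}$ and $\{j'_{1},\dots,j'_{k}\}$ are disjoint, and Case B, where they share exactly $m\geq 1$ values. In Case A every row value appears exactly twice, so $\mathrm{m}$ is forced to pair the two entries sharing each row. The unique column matching $\mathrm{n}$ for which $\mathrm{m}^{-1}\mathrm{n}=\mathrm{id}$ requires $\sigma=\tau=\mathrm{id}$, and using $\mathrm{Wg}^{O(n)}(\mathrm{id})=n^{-2k}+O(n^{-2k-2})$ yields the main contribution $(n)_{2k}/n^{2k}=1+O(n^{-1})$. Every other allowed $(\sigma,\tau,\mathrm{n})$ in Case A produces a non-identity coset-type for $\mathrm{m}^{-1}\mathrm{n}$, hence $\mathrm{Wg}^{O(n)}=O(n^{-2k-1})$; summing over at most $(n)_{2k}=O(n^{2k})$ index choices and the finite set of permutation triples gives a correction of order $n^{-1}$. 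In Case B with overlap $m$, the configuration count drops to $O(n^{2k-m})$ while every Weingarten weight is still bounded by $\mathrm{Wg}^{O(n)}(\mathrm{id})=O(n^{-2k})$, so the total Case B contribution is $O(n^{-m})\leq O(n^{-1})$.

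Combining the two cases yields $\EE Z^{2}=1+O(n^{-1})$. Together with $\EE Z=1+O(n^{-1})$ already proved, this gives
\begin{equation*}
\Var Z=\EE Z^{2}-(\EE Z)^{2}=O(n^{-1}).
\end{equation*}
The main obstacle will be the combinatorial bookkeeping inside Case A for $(\sigma,\tau)\neq(\mathrm{id},\mathrm{id})$: for each such pair one must check uniformly that no column pair matching $\mathrm{n}$ consistent with the forced row matching $\mathrm{m}$ makes $\mathrm{m}^{-1}\mathrm{n}$ the identity, guaranteeing the extra factor of $n^{-1}$ in $\mathrm{Wg}^{O(n)}$. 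The Case B counting is more routine but still requires tracking both the reduced configuration count $O(n^{2k-m})$ and the extra freedom in row matchings created by each coincidence $j_{i}=j'_{i'}$.
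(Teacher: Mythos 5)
Your proposal is correct and follows essentially the same route as the paper: square the Gram determinant expansion from Lemma~2, apply the orthogonal Weingarten formula (Proposition~10), and split by how much the two $k$-tuples of row indices overlap. The paper organizes the bookkeeping a bit differently — it separates into ``totally different'' tuples, one shared index, and two shared indices, and actually counts the pairings to track the $n^{-1}$ coefficient before collapsing everything to $1+\OO(k/n)+\OO(n^{-2})$ — while you decompose into disjoint vs.\ overlap $m\geq 1$ and stop at order-of-magnitude bounds. Since the target is only $\Var Z=\OO(n^{-1})$, your coarser accounting is sufficient and arguably cleaner: the leading term $(n)_{2k}\cdot\mathrm{Wg}^{O(n)}(\mathrm{id})=1+\OO(n^{-1})$ from $\sigma=\tau=\mathrm{id}$ with the row-forced matching, all non-identity coset types contributing an extra $n^{-1}$, and each level of overlap $m$ killing a factor $n^{-m}$ with only boundedly many row matchings (at most $3^{m}$, bounded since $k$ is fixed). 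One small remark on the obstacle you flag: ruling out $\mathrm{m}^{-1}\mathrm{n}=\mathrm{id}$ for $(\sigma,\tau)\neq(\mathrm{id},\mathrm{id})$ in Case~A is actually immediate — with the row matching $\mathrm{m}$ forced, taking $\mathrm{n}=\mathrm{m}$ would impose $\delta_{i,\sigma(i)}$ and $\delta_{i,\tau(i)}$ for every $i$, which is exactly $\sigma=\tau=\mathrm{id}$ — so no uniform case check is needed. The paper's more explicit pairing counts would matter only if one wanted the precise $n^{-1}$ coefficient, which this proposition does not require.
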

\begin{proof}
	\begin{align*}
	\EE Z^2&=\EE \left(\sum_{\sigma \in S_k}(-1)^{\left|\sigma\right|}\sum_{j_1\neq j_2 \neq \cdots \neq j_k}\Pi_{i=1}^{k}A_{j_ii}A_{j_i\sigma(i)}\right)^2\\
	      &=\EE \left(\sum_{\sigma \in S_k}(-1)^{\left|\sigma\right|}\sum_{j_1\neq j_2 \neq \cdots \neq j_k}\Pi_{i=1}^{k}A_{j_ii}A_{j_i\sigma(i)}\right)\left(\sum_{\sigma \in S_k}(-1)^{\left|\sigma\right|}\sum_{l_1\neq l_2 \neq \cdots \neq l_k}\Pi_{i=1}^{k}A_{l_ii}A_{l_i\sigma(i)}\right),
	      	\end{align*}
we need to find the leading term in the above equation. \\
Case $\romannumeral1$: If the k-tuple $\stackrel{\rightharpoonup}{j}:=(j_1,\cdots,j_k)$ and $\stackrel{\rightharpoonup}{l}:=(l_1,\cdots,l_k)$ are totally different,
\begin{align}
	\EE Z^2=\EE \sum_{\sigma} (-1)^{\left|\sigma_j\right|+\left|\sigma_l\right|}\sum_{\stackrel{\rightharpoonup}{j},\stackrel{\rightharpoonup}{l}} \Pi_{i=1}^{k}A_{j_ii}A_{j_i\sigma(i)} \Pi_{i=1}^{k}A_{l_ii}A_{l_i\sigma(i)},
\end{align}
only when $\sigma=id$, we have the leading term
\begin{figure}[h]	

\centering

\begin{tikzpicture}[node distance=1cm]
    \node (1a) at (0, 1) {1};
    \node (1b) at (1, 1) {1};
    \node (1c) at (2, 1) {1};
    \node (1d) at (3, 1) {1};

    \node (ja) at (0, 0) {$j_1$};
    \node (jb) at (1, 0) {$j_1$};
    \node (jc) at (2, 0) {$j_1$};
    \node (jd) at (3, 0) {$j_1$};

    \draw[ red, thick] (1a.north) to[out=90, in=90, looseness=1.5] (1b.north);
   \draw[ red, thick] (1c.north) to[out=90, in=90, looseness=1.5] (1d.north);
    \draw[blue, thick] (ja.south) to[out=-90, in=-90, looseness=1.5] (jb.south);
    \draw[blue, thick] (jc.south) to[out=-90, in=-90, looseness=1.5] (jd.south);

\end{tikzpicture}
\caption{}
\end{figure}

\begin{align}
	\EE \Pi_{i=1}^{k}A_{j_ii}A_{j_i\sigma(i)} \Pi_{i=1}^{k}A_{l_ii}A_{l_i\sigma(i)}=n^{-2k}-2k(2k-1)n^{-2k-2}+\mathrm{O}(n^{-2k-3}).
\end{align}
If $\sigma$ has one cycle, 
\begin{align}
	\EE \Pi_{i=1}^{k}A_{j_ii}A_{j_i\sigma(i)} \Pi_{i=1}^{k}A_{l_ii}A_{l_i\sigma(i)}=-n^{-2k-1}+n^{-2k-2}+\mathrm{O}(n^{-2k-3}).
\end{align}
If $\sigma$ has two cycles,
\begin{align}
	\EE \Pi_{i=1}^{k}A_{j_ii}A_{j_i\sigma(i)} \Pi_{i=1}^{k}A_{l_ii}A_{l_i\sigma(i)}=\OO(n^{-2k-2}).
\end{align}
Case $\romannumeral2$: If sequences $\stackrel{\rightharpoonup}{j}$ and $\stackrel{\rightharpoonup}{l}$ has exactly one equal number where the index is same, w.l.o.g we assume $j_1=l_1$.Since $j_2\neq j_3 \neq \cdots \neq j_k$, we need $\sigma(i)=i, i\geq 2$ to get the largest order, which means $\sigma=id$. In conclusion the total number of pairing is 3k.
\begin{figure}[H]
	
\centering

	\begin{tikzpicture}[node distance=1cm]
    \node (1a) at (0, 1) {1};
    \node (1b) at (1, 1) {1};
    \node (1c) at (2, 1) {1};
    \node (1d) at (3, 1) {1};

    \node (ja) at (0, 0) {$j_1$};
    \node (jb) at (1, 0) {$j_1$};
    \node (jc) at (2, 0) {$j_1$};
    \node (jd) at (3, 0) {$j_1$};

    \draw[ red, thick] (1a.north) to[out=90, in=90, looseness=1.5] (1b.north);
   \draw[ red, thick] (1c.north) to[out=90, in=90, looseness=1.5] (1d.north);
    \draw[blue, thick] (ja.south) to[out=-90, in=-90, looseness=1.5] (jb.south);
    \draw[blue, thick] (jc.south) to[out=-90, in=-90, looseness=1.5] (jd.south);

\end{tikzpicture}
\caption{}
\end{figure}
If the index is different, for example $j_1=l_2$, $\sigma$ need to be identity to get the largest order.  
\begin{figure}[h]
\centering
	\begin{tikzpicture}[node distance=0.5cm]
    \node (1a) at (0, 1) {1};
    \node (1b) at (1, 1) {1};
    \node (1c) at (2, 1) {1};
    \node (1d) at (3, 1) {1};
    \node (2a) at (4, 1) {2};
    \node (2b) at (5, 1) {2};

    \node (j1a) at (0, 0) {$j_1$};
    \node (j1b) at (1, 0) {$j_1$};
    \node (l1a) at (2, 0) {$l_1$};
    \node (l1b) at (3, 0) {$l_1$};
    \node (j1c) at (4, 0) {$j_1$};
    \node (j1d) at (5, 0) {$j_1$};

    \draw[ red, thick] (1a.north) to[out=90, in=90, looseness=1.5] (1b.north);

    \draw[ red, thick] (1c.north) to[out=90, in=90, looseness=1.5] (1d.north);

    \draw[ blue, thick] (j1a.south) to[out=-90, in=-90, looseness=1.5] (j1b.south);

    \draw[ blue, thick] (l1a.south) to[out=-90, in=-90, looseness=1.5] (l1b.south);

    \draw[ blue, thick] (j1c.south) to[out=-90, in=-90, looseness=1.5] (j1d.south);

    \draw[red, thick] (2a.north) to[out=90, in=90, looseness=1.5] (2b.north);
\end{tikzpicture}
\caption{}
\end{figure}
the total number of pairing is $2\binom{k}{2}$.

Case $\romannumeral3$: $\stackrel{\rightharpoonup}{j}$ and $\stackrel{\rightharpoonup}{l}$ has two equal numbers, the term is negligible $\OO(n^{-2})$. 
\begin{align*}
	\EE Z^2&=\left(n^{-2k}+\OO\left(\frac{1}{n^{2k+2}}\right)\right)*\left(n(n-1)\cdots(n-2k+1)\right)-2\binom{k}{2}(-n^{-2k-1}+n^{-2k-2})\left(n(n-1)\cdots(n-2k+1)\right)\\
	      &+(3k+2\binom{k}{2})*\frac{1}{n^{2k}}\left(n(n-1)\cdots(n-2k+2)\right)
	\\
	   &=
	   1+\OO(\frac{k}{n})+\OO(\frac{1}{n^2}).
\end{align*}
Moreover, 
\begin{align}
	\Var Z=\EE Z^2-\left(\EE Z\right)^2=\OO\left(\frac{k}{n}\right)+\OO\left(\frac{1}{n^2}\right).
\end{align}
\end{proof}\\
Next, we consider the fourth central moment of Z.
\begin{proposition}
 \begin{align*}
 	\EE \left(Z-\EE Z\right)^4=\OO(\frac{1}{n^2}) 
 \end{align*}	
\end{proposition}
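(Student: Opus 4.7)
I plan to extend the overlap-pattern analysis used in the variance computation to four copies of $Z$. Writing $\mu = \mathbb{E} Z$, we have the identity
\begin{equation*}
\mathbb{E}(Z-\mu)^{4} = \mathbb{E} Z^{4} - 4\mu\, \mathbb{E} Z^{3} + 6\mu^{2}\, \mathbb{E} Z^{2} - 3\mu^{4},
\end{equation*}
and the goal is to compute each $\mathbb{E} Z^{p}$, for $p = 2, 3, 4$, to additive accuracy $O(n^{-2})$. By Lemma 2, each moment expands as a sum over $p$ permutations $\sigma_{1},\dots,\sigma_{p} \in S_{k}$ and $p$ tuples $\vec{j}^{(1)},\dots,\vec{j}^{(p)}$ of pairwise-distinct indices, and each inner expectation of a product of $2pk$ matrix entries is then evaluated by the Weingarten formula together with the Matsumoto expansion of $\mathrm{Wg}^{O(n)}$ already recalled above.

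I would then classify the summands by the \emph{overlap pattern} among the sequences $\vec{j}^{(1)},\dots,\vec{j}^{(p)}$, exactly as in Cases I--III of the variance proof. The leading $n^{0}$ contribution comes from configurations in which every $\sigma_{m} = \mathrm{id}$ and the $p$ sequences are pairwise disjoint, using $\mathrm{Wg}^{O(n)}(\mathrm{id};pk) = n^{-pk}(1 + O(n^{-2}))$ and the fact that the number of such tuples equals $n^{pk}(1 + O(n^{-1}))$. Corrections of size $n^{-1}$ are produced by (a) a single transposition appearing among the $\sigma_{m}$'s, which introduces a Weingarten factor $-n^{-1}$, or (b) exactly one pair of sequences sharing exactly one common index, which costs a factor $n^{-1}$ from the counting. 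Every configuration containing more than one such simple defect is $O(n^{-2})$ and can be absorbed into the error.

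To finish, one must verify that the $n^{0}$ and $n^{-1}$ contributions cancel in the alternating combination above. The $n^{0}$ cancellation is automatic from $1 - 4 + 6 - 3 = 0$ once it is checked that $\mathbb{E} Z^{p} = 1 + O(n^{-1})$ uniformly for $p \le 4$. The $n^{-1}$ cancellation is more delicate and reflects the fact that $Z - \mu$ is asymptotically a sum of weakly dependent mean-zero pieces of size $n^{-1/2}$, so that $\mathbb{E}(Z-\mu)^{4} \sim 3\,\mathrm{Var}(Z)^{2}$, which is already $O(n^{-2})$ by the preceding variance proposition. The main obstacle will be the combinatorial bookkeeping: enumerating the overlap patterns for four $k$-sequences together with the allowed cycle types of $\sigma_{1},\dots,\sigma_{4}$ and verifying that the leading and sub-leading contributions from Weingarten weights and index-counting conspire to cancel in the alternating sum, leaving only the claimed $O(n^{-2})$ remainder.
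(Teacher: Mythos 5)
Your plan follows the same route the paper takes: expand the fourth central moment via the binomial identity, evaluate every $\mathbb{E}Z^{p}$ through the Weingarten/Matsumoto expansion, stratify by overlap pattern of the index sequences and by cycle type of the permutations, and then verify that the $n^{0}$ and $n^{-1}$ contributions cancel in the alternating sum. Your identification of the two sources of $n^{-1}$ corrections — a single transposition among the $\sigma_{m}$'s, versus a single shared index between two of the sequences — is exactly the split the paper uses in its Steps~1 and~2, and the observation that $1-4+6-3=0$ kills the $n^{0}$ level is correct.

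The gap is that the $n^{-1}$ cancellation, which you rightly call ``more delicate,'' is not actually carried out; it is deferred to ``combinatorial bookkeeping'' and backed only by the heuristic that $\mathbb{E}(Z-\mu)^{4}\sim 3\,\mathrm{Var}(Z)^{2}$. That heuristic presupposes approximate Gaussianity of $Z-\mu$, which is not established here and cannot substitute for the count. The paper's proof consists precisely of performing that count: it records the multiplicities of the relevant configurations (e.g.\ the tally $3-(3\cdot 2+1\cdot 2)+(3\cdot 1+5)-4+1=0$ for the shared-index case and $1-4+6-4+1=0$ for the disjoint case), and then computes $\mathbb{E}D_{\vec{j}}$, $\mathbb{E}D_{\vec{j}}D_{\vec{k}}$, $\mathbb{E}D_{\vec{j}}D_{\vec{k}}D_{\vec{l}}$, $\mathbb{E}D_{\vec{j}}D_{\vec{k}}D_{\vec{l}}D_{\vec{m}}$ to subleading order to exhibit the cancellation. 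Until you supply those explicit tallies — or some equivalent argument showing that each of the finitely many single-defect configurations contributes to the four moment terms with coefficients summing to zero — the proof is not complete; you have the correct scaffolding but the load-bearing step is still missing.
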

\begin{proof}
Define $D_{\stackrel{\rightharpoonup}{j}}=\sum_{\sigma \in S_k}(-1)^{\left|\sigma\right|} \sum _{j_1\neq j_2 \neq \cdots \neq j_k}\Pi_{i=1}^{k}A_{j_ii}A_{j_i\sigma(i)}$.
\begin{align}\label{four}
	\EE \left(Z-\EE Z\right)^4&= 
	 \EE\left(D_{\stackrel{\rightharpoonup}{j}}-\EE D_{\stackrel{\rightharpoonup}{j}}\right)^4  \notag \\
	&=\EE \left(D_{\stackrel{\rightharpoonup}{j}}-\EE D_{\stackrel{\rightharpoonup}{j}}\right)\left(D_{\stackrel{\rightharpoonup}{k}}-\EE D_{\stackrel{\rightharpoonup}{k}}\right)\left(D_{\stackrel{\rightharpoonup}{l}}-\EE D_{\stackrel{\rightharpoonup}{l}}\right)\left(D_{\stackrel{\rightharpoonup}{m}}-\EE D_{\stackrel{\rightharpoonup}{m}}\right)  \notag \\
	&=\EE D_{\stackrel{\rightharpoonup}{j}}D_{\stackrel{\rightharpoonup}{k}}D_{\stackrel{\rightharpoonup}{l}}D_{\stackrel{\rightharpoonup}{m}}-4\EE D_{\stackrel{\rightharpoonup}{j}}D_{\stackrel{\rightharpoonup}{k}}D_{\stackrel{\rightharpoonup}{l}}\EE D_{\stackrel{\rightharpoonup}{m}}+
	6\EE  D_{\stackrel{\rightharpoonup}{j}}D_{\stackrel{\rightharpoonup}{k}}\EE D_{\stackrel{\rightharpoonup}{l}}D_{\stackrel{\rightharpoonup}{m}}-4\EE (D_{\stackrel{\rightharpoonup}{j}})^4+\EE(D_{\stackrel{\rightharpoonup}{j}})^4 .
\end{align}

Step1, if there are  same index, the number of free index will decrease, the term above will be smaller. W.l.o.g we assume $j_1=l_1$ and show that $\OO (n^{-4k})$ term vanish. The leading term only happens when $\sigma =i d$ similarly.

\begin{figure}[h]
	\centering

\begin{tikzpicture}[node distance=1cm]
    \node (1a) at (0, 1) {1};
    \node (1b) at (1, 1) {1};
    \node (1c) at (2, 1) {1};
    \node (1d) at (3, 1) {1};

    \node (ja) at (0, 0) {$j_1$};
    \node (jb) at (1, 0) {$j_1$};
    \node (jc) at (2, 0) {$j_1$};
    \node (jd) at (3, 0) {$j_1$};

    \draw[ red, thick] (1a.north) to[out=90, in=90, looseness=1.5] (1b.north);
   \draw[ red, thick] (1c.north) to[out=90, in=90, looseness=1.5] (1d.north);
    \draw[blue, thick] (ja.south) to[out=-90, in=-90, looseness=1.5] (jb.south);
    \draw[blue, thick] (jc.south) to[out=-90, in=-90, looseness=1.5] (jd.south);

\end{tikzpicture}
\caption{}
\end{figure}
The number of $\OO (n^{-4k})$ term is 
\begin{align*}
	3-(3\times 2+1\times 2)+(3\times1+5)-4+1=0.
\end{align*}
Multiply the number of index $n^{4k-1}$, then we know $\OO(\frac{1}{n})$ term vanishes in the fourth moment of Z.
Step 2, we prove  that  for totally different 4 k-tuple , the term $\OO(\frac{1}{n^{4k}}), \OO(\frac{1}{n^{4k+1}})$ both vanish.
Firstly, for $\OO(\frac{1}{n^{4k}})$ term, $\sigma$ must be identity.
The number of  $\OO(\frac{1}{n^{4k}})$ term is 
\begin{equation}
	1-4+6-4+1=0.
\end{equation} 
In addition  (\ref{10}) gives the fact that $\OO(\frac{1}{n^{4k+1}})$  vanish.\\
  
 Next consider the $\OO(\frac{1}{n^{4k+1}})$ term , Weingarten calculus says that we have the sub-leading term when $\sigma$ has exactly one transposition.
 To simplify the proof , we omit the error term   
 \begin{align}
 	\EE D_{\stackrel{\rightharpoonup}{j}}&= n^{-k}-\binom{k}{2}n^{-k-1},\\
 	\EE D_{\stackrel{\rightharpoonup}{j}}D_{\stackrel{\rightharpoonup}{k}}&=n^{-2k}+2 k n^{-2k-1}-2\binom{k}{2}n^{-2k-1}=n^{-2k}+(2k-2\binom{k}{2})n^{-2k-1}.
 \end{align} 
 The first part of second equation comes from the case $\sigma_j=\sigma_k=id$, the second part comes from the case $\sigma_j$ and $\sigma_k$ only has one transposition. For example the reason that $2k$ in first part occurs is as follows,
 \begin{figure}[h]
 	\centering
 
 \begin{tikzpicture}[node distance=0.5cm]
    \node (1a) at (0,1) {1};
    \node (1b) at (1,1) {1};
    \node (2a) at (2,1) {2};
    \node (2b) at (3,1) {2};
    \node (1c) at (4,1) {1};
    \node (1d) at (5,1) {1};
    \node (2c) at (6,1) {2};
    \node (2d) at (7,1) {2};

    \node (j1a) at (0,0) {$j_1$};
    \node (j1b) at (1,0) {$j_1$};
    \node (j2a) at (2,0) {$j_2$};
    \node (j2b) at (3,0) {$j_2$};
    \node (l1a) at (4,0) {$l_1$};
    \node (l1b) at (5,0) {$l_1$};
    \node (l2a) at (6,0) {$l_2$};
    \node (l2b) at (7,0) {$l_2$};

    \draw[red, thick] (1a.north) to[out=90, in=90, looseness=1.5] (1c.north);

    \draw[red, thick] (1b.north) to[out=90, in=90, looseness=1.5] (1d.north);

    \draw[blue, thick] (j1a.south) to[out=-90, in=-90, looseness=1.5] (j1b.south);

    \draw[blue, thick] (l1a.south) to[out=-90, in=-90, looseness=1.5] (l1b.south);

    \draw[blue, thick] (j2a.south) to[out=-90, in=-90, looseness=1.5] (j2b.south);

    \draw[blue, thick] (l2a.south) to[out=-90, in=-90, looseness=1.5] (l2b.south);

    \draw[red, thick] (2a.north) to[out=90, in=90, looseness=1.5] (2b.north);

    \draw[red, thick] (2c.north) to[out=90, in=90, looseness=1.5] (2d.north);
\end{tikzpicture}
\caption{}
\end{figure}

\begin{figure}
	\centering
\begin{tikzpicture}[node distance=0.5cm]
    \node (1a) at (0,1) {1};
    \node (1b) at (1,1) {1};
    \node (2a) at (2,1) {2};
    \node (2b) at (3,1) {2};
    \node (1c) at (4,1) {1};
    \node (1d) at (5,1) {1};
    \node (2c) at (6,1) {2};
    \node (2d) at (7,1) {2};

    \node (j1a) at (0,0) {$j_1$};
    \node (j1b) at (1,0) {$j_1$};
    \node (j2a) at (2,0) {$j_2$};
    \node (j2b) at (3,0) {$j_2$};
    \node (l1a) at (4,0) {$l_1$};
    \node (l1b) at (5,0) {$l_1$};
    \node (l2a) at (6,0) {$l_2$};
    \node (l2b) at (7,0) {$l_2$};

    \draw[red, thick] (1a.north) to[out=90, in=90, looseness=1.5] (1d.north);

    \draw[red, thick] (1b.north) to[out=90, in=90, looseness=1.5] (1c.north);

    \draw[blue, thick] (j1a.south) to[out=-90, in=-90, looseness=1.5] (j1b.south);

    \draw[blue, thick] (l1a.south) to[out=-90, in=-90, looseness=1.5] (l1b.south);

    \draw[blue, thick] (j2a.south) to[out=-90, in=-90, looseness=1.5] (j2b.south);

    \draw[blue, thick] (l2a.south) to[out=-90, in=-90, looseness=1.5] (l2b.south);

    \draw[red, thick] (2a.north) to[out=90, in=90, looseness=1.5] (2b.north);

    \draw[red, thick] (2c.north) to[out=90, in=90, looseness=1.5] (2d.north);
\end{tikzpicture}
\caption{}
\end{figure}
Similarly, we have
\begin{align}
	\EE D_{\stackrel{\rightharpoonup}{j}}D_{\stackrel{\rightharpoonup}{k}}D_{\stackrel{\rightharpoonup}{l}}&=n^{-3k}+(6k-3\binom{k}{2})n^{-3k-1},\\
	\EE D_{\stackrel{\rightharpoonup}{j}}D_{\stackrel{\rightharpoonup}{k}}D_{\stackrel{\rightharpoonup}{l}}D_{\stackrel{\rightharpoonup}{m}}&=n^{-4k}+(2\binom{4}{2}k-4\binom{k}{2})n^{-4k-1}.
\end{align}
	Put the above four equations back into equation (\ref{four}), we prove that 
	$\OO(n^{-4k-1})$ term vanishes. Since the number of free index is 4k, which means that $\OO(n^{-k-1})$ term vanish in the fourth moment of Z.
\end{proof}\\
Let $\mu_k$ denote the k-th absolute central moment of Z. Consider Taylor expansion of $\log Z$ at $\mathbb{E}Z$, which is a special case of the delta method. See \cite{Haym}	p166 for more details.
\begin{align}
	 \EE [f(Z)] & =\EE \left[f\left(\EE Z+\left(Z-\EE Z\right)\right)\right] \notag\\ & \approx \EE \left[f\left(\EE Z\right)+f^{\prime}\left(\EE Z\right)\left(Z-\EE Z\right)+\frac{1}{2} f^{\prime \prime}\left(\EE Z\right)\left(Z-\EE Z\right)^2\right]\notag \\ & =f\left(\EE Z\right)+\frac{1}{2} f^{\prime \prime}\left(\EE Z\right) \EE \left[\left(Z-\EE Z\right)^2\right] .
\end{align}
Similarly, 

$$
\Var[f(Z)] \approx\left(f^{\prime}(\EE[Z])\right)^2 \Var[X]=\left(f^{\prime}\left(\EE Z)\right)\right)^2 \mu_2(Z)^2-\frac{1}{4}\left(f^{\prime \prime}\left(\EE Z\right)\right)^2 \mu_2(Z)^4.
$$

To do more 
\begin{align}
\log Z & \approx \log \mathbb{E}Z+\frac{Z-\mathbb{E}Z
	}{\mathbb{E}Z},\\
	\mathbf{Var} \log Z & \approx \frac{\mathbf{Var} Z}{(\mathbb{E}Z)^2}=\OO(\frac{1}{n}),\\
	\mu_4(\log Z) & \approx\frac{\mu_4 (Z)}{(\mathbb{E}Z)^4}=\OO(\frac{1}{n^2}).
	  \end{align}
	   We follow the notation in Bentkus \cite{Ben05} and define
$$
S:=S_N=X_1+\cdots+X_N,
$$
where $X_1, \ldots, X_N$ are independent random vectors in $\mathbb{R}^k$ with common mean $\mathbb{E} X_j=$ 0 . We set
$$
C:=\operatorname{cov}(S)
$$
to be the covariance matrix of $S$, which we assume is invertible. With the definition
$$
\beta_j:=\mathbb{E}\left\|C^{-\frac{1}{2}} X_j\right\|_2^3, \beta:=\sum_{j=1}^N \beta_j,
$$
we have the following \cite{Ben05}:
Theorem 6.4 (Multivariate CLT with Rate).\\
 There exists an absolute constant $c>0$ such that
$$
d\left(S, C^{\frac{1}{2}} Z\right) \leq c k^{\frac{1}{4}} \beta
$$
where $Z \sim \mathcal{N}\left(0, \operatorname{Id}_k\right)$ denotes a standard Gaussian on $\mathbb{R}^k$.\\
We have 
\begin{align}
	\beta_j&=\mathbb{E}\left|[N\mathbf{Var}(\log Z)]^{-\frac{1}{2}}\log Z \right|^3 \\
	& \leq C N^{-\frac{3}{2}}[\mathbf{Var}(\log Z)]^{-\frac{3}{2}}\mu_4(\log Z)\\
	&=C\frac{1}{N^{\frac{3}{2}}n^{\frac{1}{2}}}
\end{align}
Therefore,
\begin{align}
	\beta:=\sum_{j=1}^{N}\beta_j \leq C\frac{1}{\sqrt{nN}},
\end{align}
then similar to proof of Theorem \ref{1.1} , divide into 3 parts we can prove $\lambda_1 +\cdots+\lambda_k$ 
	   is convergent to Gaussian.
\section*{Acknowledgements}
I am grateful to Professor Dang-Zheng Liu for his guidance. I would also like to thank Yandong Gu, Guangyi Zou, Ruohan Geng for their useful advice.
 
\end{document}